\newcommand{\0}{\mathscr{O}}
\newcommand{\M}{\mathscr{M}}
\theoremstyle{plain} 
\newtheorem{thm}{Theorem}[section] 
\newtheorem*{Thm}{Theorem}
\newtheorem{cor}[thm]{Corollary} 
\newtheorem{lem}[thm]{Lemma} 
\newtheorem{prop}[thm]{Proposition}
\theoremstyle{definition} 
\newtheorem{defn}[thm]{Definition}
\theoremstyle{remark} 
\newtheorem{rmk}[thm]{Remark}
\begin{document}

\title[Towards generic base-point-freeness for hyperkähler manifolds of generalized Kummer type]{Towards generic base-point-freeness for hyperkähler manifolds of generalized Kummer type}  
\author[M. Varesco]{Mauro Varesco}
\address{Mathematisches Institut, Universitat Bonn, Endenicher Allee 60, 53115 Bonn, Germany}
\email{varesco@math.uni-bonn.de}

\begin{abstract}
We study base-point-freeness for big and nef line bundles on hyperkähler manifolds of generalized Kummer type: For $n\in \{2,3,4\}$, we show that, generically in all but a finite number of irreducible components of the moduli space of polarized $\mathrm{Kum}^n$-type varieties, the polarization is base-point-free. We also prove generic base-point-freeness in the moduli space in all dimensions if the polarization has divisibility one.
\end{abstract}
\maketitle

\section*{Introduction}
The starting point for the analysis presented in this article is the following observation in the context of Fujita’s conjecture: 
Given a K3 surface $X$ and an ample line bundle $H\in \mathrm{Pic}(X)$, Mayer proved in \cite{M} that the line bundle $2H$ is base-point-free. This result is stronger than what one gets when applying Fujita's conjecture to the case of K3 surfaces. Indeed, the conjecture predicts that the line bundle $3H$ is base-point-free, see e.g. \cite[Conj.\ 10.4.1]{La}.
This suggests that it might be interesting to study questions related to big and nef line bundles on hyperkähler manifolds.

\smallskip
Let $\M_{d,t}^n$ be the moduli space of hyperkähler manifolds of $\mathrm{Kum}^n$-type with a polarization of square $2d$ and divisibility $t$. 
In this article, we study base-point-freeness for big and nef line bundles on hyperkähler manifolds of $\mathrm{Kum}^n$-type: 
In low dimenision, that is $n\in \{2,3,4\}$, we prove that, for all but a finite number of choices of $d$ and $t$, the polarization of the generic element of $\M_{d,t}^n$ is base-point-free:

\begin{Thm} [Theorem \ref{Final Theorem}]
Let $n\in\{2,3,4\}$, and let $d$ and $t$ be positive integers such that the moduli space $\M_{d,t}^n$ is non-empty. Let $A$ be the set of triples
\[A\coloneqq\{(2,1,2),(3,4,2),(3,28,8),(3,92,8),(4,3,2),(4,20,5),(4,55,10)\}.\] 
Then, if $(n,d,t)\not\in A$, the polarization of a general pair in $\M_{d,t}^n$ is base-point-free.
\end{Thm}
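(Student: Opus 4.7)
The plan is to invoke the numerical criterion for base-point-freeness (the Kum$^n$-type analogue of the Bayer--Hassett--Tschinkel theorem for $K3^{[n]}$-type) which should have been established in preceding sections. This criterion asserts that, for a very general $(X,H)\in\M_{d,t}^n$ with $\mathrm{Pic}(X)=\mathbb{Z}\cdot H$, the polarization $H$ is base-point-free unless a certain \emph{wall class} with prescribed numerical invariants exists in the extended Mukai lattice $\widetilde{\Lambda}_{\mathrm{Kum}^n}$ of $X$. These invariants consist of a self-intersection $v^2$ lying in a small bounded set depending on $n$, a pairing $(v,H)$ with the Mukai vector of the pair, and congruence conditions tied to the divisibility $t$. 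The theorem is then equivalent to the purely arithmetic statement that such obstructing classes exist only for the triples $(n,d,t)$ listed in $A$.

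I would proceed by case analysis on the discrete invariants of the obstructing class. For each $n\in\{2,3,4\}$ and each value of $v^2$ in the finite allowed set, the wall condition reduces to a Pell-type equation in $(d,t)$; the divisibility constraint cuts this down to an arithmetic progression of potential solutions, and combining with the upper bound on $v^2$ together with the lower bound $H^2=2d>0$ confines $(d,t)$ to a finite range. Within this range, the cases are handled by direct enumeration and verification of whether the wall class truly obstructs base-point-freeness or can be ruled out by further geometric input (for instance, by realizing $(X,H)$ as a moduli space of sheaves on an abelian surface and applying Fourier--Mukai arguments).

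The main obstacle is verifying the completeness of the list $A$: one must both check that each triple in $A$ satisfies the obstruction criterion (so cannot be removed) and that no triple outside the a priori bounded region yields an obstructing wall class. I expect $n=4$ to be the most demanding, since the relevant lattice is richest, the divisibility $t$ ranges over more values, and several sporadic small-$d$ solutions must be checked individually to confirm that $A$ is exhaustive. The cases $n=2,3$ should be comparatively clean, with only a handful of Pell equations to analyze and the divisibility forcing most apparent solutions to drop out.
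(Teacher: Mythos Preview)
Your proposal takes a fundamentally different route from the paper, and rests on a hypothesis the paper does not establish. The paper's proof is constructive and elementary: it first shows (via Onorati's count of connected components and smoothness of the moduli space) that $\M_{d,t}^n$ is irreducible for $n\in\{2,3,4\}$, then proves that base-point-freeness is open in families, so it suffices to exhibit a single $\mathrm{Kum}^n$-type variety carrying a base-point-free line bundle with the prescribed invariants $(d,t)$. This example is produced explicitly. For $t=1$ the paper shows directly that for any effective $L$ on an abelian surface $T$ the induced bundle $L_n$ on $\mathrm{Kum}^n(T)$ is base-point-free, by constructing symmetric sections on $K_T^n\simeq T^n$. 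For $t>1$ it finds, for each $(d,t)$, an abelian surface $(T,L)$ so that a bundle of the form $tL_n-\delta$ (or a small variant such as $8L_3-3\delta$, $5L_4-2\delta$, $10L_4-3\delta$) has the required square and divisibility, and then applies the $k$-very ampleness criterion of Alagal--Maciocia to conclude that $tL$ is $n$-very ample, whence $tL_n-\delta$ is globally generated via the tautological bundle machinery. The set $A$ consists precisely of the finitely many $(n,d,t)$ for which this construction fails to produce an $n$-very ample bundle; no claim is made that base-point-freeness actually fails there.

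Your plan invokes a ``$\mathrm{Kum}^n$-type analogue of the Bayer--Hassett--Tschinkel theorem'' giving a numerical wall-class criterion for base-point-freeness. No such criterion appears in the paper, and I am not aware of one in the literature that directly characterizes base-point-freeness (as opposed to, say, the ample or movable cone) for generalized Kummer varieties. Without it, the entire Pell-equation analysis has no foundation. Moreover, you misread the role of $A$: the theorem does not assert that the triples in $A$ are genuine exceptions, only that the argument covers everything else; so your step of ``verifying that each triple in $A$ satisfies the obstruction criterion (so cannot be removed)'' is neither required nor supported by the paper. If you want to follow the paper's method, replace the wall-crossing input with the explicit existence results (Propositions on representatives of the form $tL_n-\delta$) and the $k$-very ampleness bound $f(tL)=2(t-1)\hat d-2\ge n$, and then enumerate the finitely many $(n,d,t)$ where this inequality fails.
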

In the case where the divisibility of the polarization is one, we prove generic base-point-freeness in all dimensions:
\begin{Thm}[Theorem \ref{thm bpf for divisibility 1}]
The polarization of the general pair of $\M_{d,1}^n$ is base-point-free for all $d>0$ and 
$n>1$.
\end{Thm}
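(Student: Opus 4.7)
The plan is to exhibit in every irreducible component of $\M_{d,1}^n$ a specific point at which the polarization is base-point-free, and then invoke openness of base-point-freeness in flat families. Openness is a consequence of Kawamata-Viehweg vanishing: for a polarized family $(\pi\colon\mathcal{X}\to T,\mathcal{H})$ of hyperkähler manifolds with $\mathcal{H}_t$ big and nef, $R^i\pi_*\mathcal{H}=0$ for $i>0$, so $\pi_*\mathcal{H}$ is locally free, commutes with base change, and the locus where the evaluation map $\pi^*\pi_*\mathcal{H}\to\mathcal{H}$ is fiberwise surjective is open in $T$.

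The specific model I would work with is the generalized Kummer variety $K^n(A)$ of an abelian surface $A$ with Picard rank one generated by a primitive ample line bundle $L$, equipped with a polarization of the form $h=\mu(L)+j\delta$, where $\mu\colon H^2(A,\mathbb{Z})\hookrightarrow H^2(K^n(A),\mathbb{Z})$ is the natural embedding and $\delta$ is the orthogonal generator with $\delta^2=-2(n+1)$. Such an $h$ has divisibility one (since $L$ is primitive) and $h^2=L^2-2(n+1)j^2$. The key technical lemma is: \emph{if $L$ is base-point-free on $A$ and $j=0$, then $\mu(L)$ is base-point-free on $K^n(A)$.} This is proved via the ambient Hilbert scheme $A^{[n+1]}$, where sections of the tautological line bundle associated with $L$ are $\mathfrak{S}_{n+1}$-symmetric tensors in $H^0(A,L)^{\otimes(n+1)}$; if $L$ is base-point-free on $A$ then these symmetric tensors separate every length-$(n+1)$ subscheme, and the restriction to $K^n(A)$ remains globally generated. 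Since for a generic abelian surface of Picard rank one a primitive polarization of type $(1,d)$ is base-point-free whenever $d\geq 3$, this directly covers all $d\geq 3$, provided that the pair $(K^n(A),\mu(L))$ meets every irreducible component of $\M_{d,1}^n$; the latter is controlled by the monodromy description of $\mathrm{Kum}^n$-type manifolds due to Markman and Mongardi, which for divisibility-one primitive classes reduces the component count to invariants that are matched by an appropriate choice of $(A,L)$.

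The principal obstacle is the small cases $d\in\{1,2\}$, where $L$ itself is not base-point-free: the principal polarization has a single section, and a pencil of type $(1,2)$ always has four base points. Here I would pass to the twisted model $h=\mu(L')+j\delta$ with $j=\pm 1$ and $L'$ primitive of type $(1,d+n+1)$ on $A$, so that $h^2=2d$ and $h$ still has divisibility one; since $n\geq 2$ one has $d+n+1\geq 4$, and $L'$ is base-point-free on a generic such $A$. One must then check that $h$ is ample on $K^n(A)$ for an appropriate sign of $j$, and propagate base-point-freeness of $L'$ through the $\delta$-twist. A promising route is to specialise to $A=E_1\times E_2$ a product of elliptic curves, where $K^n(A)$ carries an explicit Lagrangian fibration $K^n(A)\to\p^n$ whose isotropic fiber class combines linearly with $\mu(L')$ and lets one express $h$ as a sum of two base-point-free summands. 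Verifying nefness, confirming that every monodromy component is realised by some such model, and controlling the $\delta$-twist in the boundary cases are the main technical difficulties of the argument.
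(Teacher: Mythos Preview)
Your overall strategy---produce a base-point-free representative and invoke openness---matches the paper, and for $d\geq 3$ your argument is essentially correct. (A small quibble: your phrase ``separate every length-$(n+1)$ subscheme'' describes $n$-very ampleness, which is what one needs for $L_n-\delta$, not for the untwisted class $L_n$; for the latter it suffices that if $s\in H^0(A,L)$ avoids the support of $\xi$ then $s^{\otimes(n+1)}$ does not vanish at $\xi$.)

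The genuine gap is at $d\in\{1,2\}$. Your proposed workaround via $\mu(L')\pm\delta$ and Lagrangian fibrations is speculative, and you yourself flag controlling the $\delta$-twist as an unresolved difficulty. The paper sidesteps this entirely by proving a strictly stronger key lemma: for \emph{any effective} line bundle $L$ on $T$---no base-point-freeness on $T$ required---the induced $L_n$ on $\mathrm{Kum}^n(T)$ is base-point-free. The mechanism is a translation trick: identify $K^n_T\simeq T^n$ and write $M$ for the restriction of $L^{\boxtimes(n+1)}$. For each $x\in T$ one forms an explicit translate $M_x$ (pulling back the factors of $M$ by suitable diagonal translations on $T^n$) and checks via Appell--Humbert that $M_x\simeq M$. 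Starting from a single nonzero $s\in H^0(T,L)$ one obtains an $\mathfrak{S}_{n+1}$-symmetric section $s_x\in H^0(M_x)\simeq H^0(M)$ whose value at $(a_1,\ldots,a_n)$ is a product of values of $s$ at the points $a_i+nx$ and $-a_1-\cdots-a_n+nx$; since $nx$ can be chosen to avoid any fixed divisor on $T$, the family $\{s_x\}_{x\in T}$ has no common zero. Thus even $h^0(L)=1$ suffices, and $d=1,2$ need no separate argument. Incidentally, $\M^n_{d,1}$ is irreducible for all $n,d$ (connected by Onorati's count plus smoothness), so your concern about hitting every component via monodromy is unnecessary.
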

In the article \cite{R} by Rie\ss ~, a similar statement is proven for hyperkähler manifolds which are deformation equivalent to the Hilbert scheme of points on a K3 surface: There, the author proves generic base point freeness in the moduli space $\mathscr{F}_{d,t}^2$ parametrizing pairs $(X,L)$ where $X$ is a hyperkähler manifold of $\mathrm{K3}^{[2]}$-type and $L$ is a polarization on $X$ of square $2d$ and divisibility $t$.
This result has then been extended by Debarre in \cite{D}: There, the author studies in all dimensions the case where the divisibility of the polarization is one or two, showing that, if $t=1$ and $d\geq n-1$ or if $t=2$ and $d\geq n+3$, the polarization of a generic pair $(X,L)\in \mathscr{F}^n_{d,t}$ is base-point-free.
In dimension six, generic base-point-freeness for $\mathrm{K3}^{[3]}$-type varieties is proven by Agostini and Rie\ss~ in the draft \cite{AR}.
For a description of the divisorial component of the base locus of big and nef line bundles on hyperkähler manifolds of $\mathrm{K}3^{[n]}$-type and of $\mathrm{Kum}^n$–type see the article \cite{R18} by Rie\ss ~. There the author  provides in particular a criterion for big and nef line bundles to have base divisor. 

We are the first, to our knowledge, to study generic base-point-freeness for the moduli space of polarized $\mathrm{Kum}^n$-type varieties. Nevertheless, most ideas and techniques that we have used in this paper are inspired by the work of the aforementioned authors. The structure of this article is the following:

\smallskip

In Section \ref{Section general conventions}, we recall the definition of \textit{hyperkähler manifold}, and we collect some results on the lattice structure of the integral second cohomology group of hyperkähler manifolds of $\mathrm{Kum}^n$-type.

\smallskip
In Section \ref{section the moduli space}, we deduce from a result in \cite{O} that the moduli space $\M_{d,t}^n$ is connected when non-empty for $n\in \{2,3, 4\}$. Since it is known that this space is always smooth, we conclude that it is irreducible. We then recall the proof of the fact that base-point-freeness is an open property in families of polarized hyperkähler manifolds. This will lead us to a useful criterion for generic base-point-freeness in the moduli space.

\smallskip
Section \ref{section divisibility one} is devoted to study the case where the divisibility of the polarization is one. We deal with it separately since the technique that we use differs from the ones we use in the other cases. In particular, we prove that any effective line bundle $L$ on an abelian surface $T$ induces a  base-point-free line bundle on $\mathrm{Kum}^n(T)$ for all $n>1$. We will use this to prove generic base-point-freeness in the moduli space $\M^n_{d,1}$ for all $n>1$ and $d>0$.

\smallskip
In Section \ref{Sec: tautological bundles}, we recall the notions of tautological bundles on the Hilbert scheme of points and of $k$-very ampleness. These are key ingredients for the proof of generic base-point-freeness in the remaining cases.

\smallskip

Finally, in Section \ref{last section}, we deal with polarizations with higher divisibility. We prove that, for all but a finite number of cases, generic base-point-freeness holds for $\M^n_{d,t}$ for $n\in\{2,3,4\}$. This will conclude the proof of the main result of this article.

\section*{Acknowledgements}
This work is part of my Master's thesis at ETH Z\"{u}rich. I would like to thank Ulrike Rie\ss ~ for suggesting me this topic, for supervising my thesis, and for her continuous advice and support.
I am also grateful for the financial support provided by ETH Foundation during my master's, and by ERC Synergy Grant HyperK
(Grant agreement No.\ 854361) during the completion and editing of the article. In particular, I express my gratitude to Daniele Agostini, Fabrizio Anella, and Daniel Huybrechts for reading a preliminary version of this paper.

\section{Hyperkähler manifolds}
\label{Section general conventions}
Let us begin by recalling the definition of hyperkähler manifold. For an introduction to the subject see \cite{GHJ}. 
\begin{defn}
A \textit{hyperkähler manifold} is a simply-connected compact K\"ahler manifold $X$, such that $H^0(X,\Omega^2_X)$ is generated by an everywhere non-degenerate holomorphic two-form.
\end{defn}
A particular family of examples of hyperkähler manifolds can be constructed as follows: Let $T$ be an abelian surface and let $T^{[n+1]}$ be the Hilbert scheme of $n+1$ points on $T$. Then, the fibre over $0$ of the natural morphism $T^{[n+1]}\rightarrow T$ induced by the sum is a hyperkähler manifold of dimension $2n$. It is called \textit{generalized Kummer variety}, and it is denoted by $\mathrm{Kum}^n(T)$.
In this paper, we focus on varieties which are deformation equivalent to some generalized Kummer variety. These varieties are still hyperkähler manifolds and are called $\mathrm{Kum}^n$-type varieties.

\smallskip
Recall that the second cohomology group $H^2(X,\mathbb{Z})$ of a hyperkähler manifold $X$ has a natural lattice structure induced by the Beauville--Bogomolov quadratic form $q$. Talking about the lattice $(H^2(X,\mathbb{Z}),q)$, we will use the following definitions:
\begin{defn}
Let $(\Lambda, q)$ be a lattice and let $\alpha$ be an element in $\Lambda$, then:
\begin{itemize}
 \item[(\textit{i})] The element $\alpha$ is called \textit{primitive}, if it is not a non-trivial multiple of another element, i.e., $\alpha=k\cdot \alpha'$ for some $k\in \mathbb{Z}$ and $\alpha'\in \Lambda$ implies $k=\pm 1$.
 \item[(\textit{ii})] The \textit{divisibility of }$\alpha$, $\mathrm{div}(\alpha)$, is the multiplicity of the element $q(-,\alpha)\in\Lambda^{\vee}$, i.e., div$(\alpha)=m$ if $m$ spans the ideal $\{q(B,\alpha)|\; B\in\Lambda\}\subseteq \mathbb{Z}$.
 \item[(\textit{iii})] Let $\{e_i\}$ be a $\mathbb{Z}$-basis of the lattice $\Lambda$. The \textit{discriminant} of the lattice $(\Lambda, q)$ is defined as the determinant of the matrix $(q(e_i,e_j))_{i,j}$.
\end{itemize}
\end{defn}
As the discriminant of a lattice does not depend on the choice of the basis, we see that the divisibility of a primitive element divides the discriminant of the lattice.
\begin{rmk}
\label{ref: rmk divisibility is invariant under deformations}
If $X$ is a hyperkähler manifold, then $H^1(X,\0_X)=0$. Therefore, using the exponential sequence, one identifies the Picard group of $X$ with a subgroup of the lattice $H^2(X,\mathbb{Z})$.
Given an element $L\in \mathrm{Pic}(X)$, we will consider its divisibility with respect to the lattice $(H^2(X,\mathbb{Z}),q)$. As a consequence, the divisibility of any element of Pic$(X)$ is invariant under deformation of $X$.
\end{rmk}

In the case of hyperkähler manifolds of $\mathrm{Kum}^n$-type, the lattice structure on the second cohomology has been described: 

\begin{prop}\cite[Prop.\ 8]{B}
\label{ref: prop lattice Kum type ISV} 
Let $X\coloneqq\mathrm{Kum}^n(T)$ for some abelian surface $T$. Then:
\begin{itemize}
 \item[(\textit{i})] The lattice $(H^2(X,\mathbb{Z}),q)$ has the following orthogonal decomposition:
 \[(H^2(X,\mathbb{Z}),q)=(H^2(T,\mathbb{Z}),\cup)\oplus (\mathbb{Z}\delta),\]
 where $\cup$ is the cup product on $H^2(T,\mathbb{Z})$ and $\delta$ is the restriction to $X$ of the divisor $\frac{1}{2}E$, where $E$ is the exceptional divisor on $T^{[n+1]}$ of the Hilbert--Chow morphism. In particular, $q(\delta)=-2n-2$.
 \item[(\textit{ii})] Let $Y$ be a hyperkähler manifold of Kum$^n$-type which is deformation equivalent to $X$. Then, the lattice $(H^2(Y,\mathbb{Z}),q)$ has the following orthogonal decomposition:
 \[(H^2(Y,\mathbb{Z}),q)\simeq(H^2(X,\mathbb{Z}),q)=(H^2(T,\mathbb{Z}),\cup)\oplus (\mathbb{Z}\delta).\]
\end{itemize}
\end{prop}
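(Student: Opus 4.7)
The plan is to reduce the lattice decomposition to the Beauville--Fogarty description of $H^2(T^{[n+1]},\mathbb{Z})$ and to transport it to the fibre $\mathrm{Kum}^n(T)$ of the summation morphism $\sigma\colon T^{[n+1]}\to T$ via the natural isogeny
\[\phi\colon T\times \mathrm{Kum}^n(T)\longrightarrow T^{[n+1]},\qquad (t,Z)\mapsto t_*Z,\]
which is étale of degree $(n+1)^4$ with deck group the $(n+1)$-torsion subgroup $T[n+1]$ acting diagonally. Since $T[n+1]$ sits inside the connected group $T$, it acts trivially on all second cohomologies, so $\phi^*$ becomes an isomorphism after tensoring with $\mathbb{Q}$. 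Combining this with Beauville--Fogarty and with the K\"unneth decomposition on the product $T\times \mathrm{Kum}^n(T)$ (valid in degree two because $H^1(\mathrm{Kum}^n(T),\mathbb{Z})=0$ by simple connectedness) yields the desired rational decomposition, with the class $\delta$ arising as the restriction $(E/2)|_{\mathrm{Kum}^n(T)}$.

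To promote this to the integral statement, I would rely on torsion-freeness of $H^2(\mathrm{Kum}^n(T),\mathbb{Z})$ (again from simple connectedness) and on primitivity checks on the two generating sublattices: $\delta$ is primitive because $E/2$ is primitive on the Hilbert scheme, and the image of $H^2(T,\mathbb{Z})$ is primitive via a concrete geometric representative (for instance, cohomology classes pulled back along a suitable section of $\sigma$ composed with $\phi$). A discriminant count then forces saturation of the sum. Orthogonality of the two summands with respect to the Beauville--Bogomolov form, and the fact that the form restricts to the cup product on the $H^2(T,\mathbb{Z})$-factor, follow from Fujiki's relation together with direct intersection calculations on $\mathrm{Kum}^n(T)$; the same Fujiki argument, applied to $\delta^{2n}$ using the known Fujiki constant of $\mathrm{Kum}^n$-type, yields $q(\delta)=-2(n+1)$.

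Part (ii) follows at once: both $H^2(-,\mathbb{Z})$ and the Beauville--Bogomolov form are preserved under smooth proper deformation of hyperkähler manifolds, so any $\mathrm{Kum}^n$-type manifold $Y$ carries the same lattice as $\mathrm{Kum}^n(T)$ through any connecting deformation family.

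The main obstacle is the integral step in (i): converting the rational $\phi^*$-isomorphism into an equality of full lattices requires checking saturation of the sum and pinning down the exact normalization $q(\delta)=-2(n+1)$ (rather than the $-2n$ one might naively transfer from the Hilbert scheme's own exceptional class). Both reduce to Fujiki-type computations once concrete geometric representatives of the generators are fixed, but the bookkeeping is where all the content sits.
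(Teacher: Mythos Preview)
The paper does not supply its own proof of this proposition: it is quoted from Beauville's foundational paper \cite{B} and used as a black box throughout. So there is nothing in-paper to compare against; your sketch is in effect a reconstruction of Beauville's original argument, and the overall architecture---transport along the \'etale cover $\phi\colon T\times\mathrm{Kum}^n(T)\to T^{[n+1]}$, combined with K\"unneth on the product and a Fujiki computation for $q(\delta)$---is indeed his.

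Two points in your outline would need tightening in a full write-up. First, the Fogarty description $H^2(S^{[m]},\mathbb{Z})\cong H^2(S,\mathbb{Z})\oplus\mathbb{Z}\delta$ requires $H^1(S,\mathbb{Z})=0$, which fails for an abelian surface; for $T^{[n+1]}$ with $n\geq 2$ one has $b_2=13$, with an extra rank-six summand coming from $\Lambda^2 H^1(T,\mathbb{Z})$, so identifying which piece lands in $H^2(\mathrm{Kum}^n(T))$ under $\phi^*$ takes more than the one line you allot it. Second, your justification that the deck group $T[n+1]$ acts trivially ``since it sits inside the connected group $T$'' is valid for the action on $H^*(T)$ and on $H^*(T^{[n+1]})$, but not directly for the action on $H^*(\mathrm{Kum}^n(T))$: the full torus $T$ does not act on the Kummer fibre, only its $(n+1)$-torsion does, so there is no homotopy to the identity through $T$. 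One needs a separate argument---e.g.\ a Betti-number comparison, or a monodromy analysis for the fibration $\sigma$---to conclude triviality on that factor. Neither issue is fatal, and Beauville handles both; but they are, as you yourself anticipate, exactly where the content sits.
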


\begin{rmk}
\label{rmk: delta on hilb}
By abuse of notation, we will use $\delta$ to denote both the divisor $\frac{1}{2}E$ on the Hilbert scheme and its restriction on the generalized Kummer variety as in Proposition \ref{ref: prop lattice Kum type ISV}. Similarly, given a line bundle $L$ on an abelian surface $T$, we will denote by $L_n$ both the induced divisor on $T^{[n+1]}$ and its restriction to $\mathrm{Kum}^n(T)$. It will be clear from the context which of the two we are referring to.
\end{rmk}

\begin{rmk}
\label{rmk: divisibility divides GCD}
Let $X=\mathrm{Kum}^n(T)$ for some abelian surface $T$. Proposition \ref{ref: prop lattice Kum type ISV} implies that any element $\alpha\in H^2(X,\mathbb{Z})$ can be expressed as $\alpha=a\lambda_n+b\delta$ for a primitive element $\lambda\in H^2(T,\mathbb{Z})$ and $a,b\in \mathbb{Z}$. Since $(H^2(T,\mathbb{Z}),\cup)$ is a unimodular lattice, the divisibility of $\alpha$ is
\[\mathrm{div}(\alpha)=\mathrm{gcd}(a,2(n+1)b).\]
\end{rmk}

\section{The moduli space}
\label{section the moduli space}
Let $\mathscr{M}_{d,t}^n$ be the moduli space which parametrizes pairs $(X,L)$ where $X$ is a hyperkähler manifold of Kum$^n$-type and $L$ is a primitive and ample line bundle on $X$ with $q(L)=2d$ and div$(L)=t$.

\begin{rmk}
\label{rmk: moduli space empty}
Note that, by Remark \ref{rmk: divisibility divides GCD}, the moduli space $\M^n_{d,t}$ is empty
if $t\not|\gcd(2d,2n+2)$.
\end{rmk}

In \cite{O}, the author characterizes the number of connected components of the moduli space
$\mathscr{M}_{d,t}^n$. Let us fix the necessary notation to recall that result. 
Let $n,d,t>0$ be integers such that $t\, |\, \gcd(2d,2n+2)$ and set
\[d_1\coloneqq \frac{2d}{\gcd(2d,2n+2)},\quad n_1\coloneqq \frac{2n+2}{\gcd(2d,2n+2)},\quad
g\coloneqq \frac{\gcd(2d,2n+2)}{t},\]
\[w\coloneqq \gcd(g,t),\qquad g_1\coloneqq \frac{g}{w},\qquad t_1\coloneqq \frac{t}{w}.\]
Let $\phi$ be the Euler function, i.e., the function that associates to a positive integer $l$ the number of integers in $\{1,\ldots,l\}$ which are coprime with $l$. Moreover, let $\rho(l)$ be the number of primes in the factorization of $l$. For $w$ and $t_1$ as above, write $w=w_{+}(t_1)w_{-}(t_1)$, where $w_{+}(t_1)$ is the product of all powers of primes dividing $\gcd(w,t_1).$ Finally, denote by $|\M^n_{d,t}|$ the number of connected components of $\M^n_{d,t}$.

\begin{thm}\cite[Thm.\ 5.3]{O}
\label{thm conn comp of mod space}
With the above notation we have:
\begin{enumerate}
 \item $|\M^n_{d,t}|=w_+(t_1)\phi(w_{-}(t_1))2^{\rho(t_1)-1}$ if $t>2$ and one of the following holds:
 \begin{enumerate}
 \item $g_1$ is even, $\gcd(d_1,t_1)=1=\gcd(n_1,t_1)$ and $-d_1/n_1$ is a quadratic residue mod $t_1$;
 \item $g_1, t_1$, and $d_1$ are odd, $\gcd(d_1,t_1)=1=\gcd(n_1,2t_1)$ and $-d_1/n_1$ is a quadratic residue mod $2t_1$;
 \item $g_1, t_1$, and $w$ are odd, $d_1$ is even, $\gcd(d_1,t_1)=1=\gcd(n_1,2t_1)$ and $-d_1/4n_1$ is a quadratic residue mod $t_1$.
 \end{enumerate}
 \item $|\M^n_{d,t}|=w_+(t_1)\phi(w_{-}(t_1))2^{\rho(t_1/2)-1}$ if $t>2$, $g_1$ is odd, $t_1$ is even, $\gcd(d_1,t_1)=1=\gcd(n_1,2t_1)$ and $-d_1/n_1$ is a quadratic residue mod $2t_1$.
 \item $|\M^n_{d,t}|=1$ if $t\leq 2$ and one of the following holds:
 \begin{enumerate}
 \item $g_1$ is even, $\gcd(d_1,t_1)=1=\gcd(n_1,t_1)$ and $-d_1/n_1$ is a quadratic residue mod $t_1$;
 \item $g_1, t_1$, and $d_1$ are odd, $\gcd(d_1,t_1)=1=\gcd(n_1,2t_1)$ and $-d_1/n_1$ is a quadratic residue mod $2t_1$;
 \item $g_1, t_1$, and $w$ are odd, $\gcd(d_1,t_1)=1=\gcd(n_1,2t_1)$ and $-d_1/4n_1$ is a quadratic residue mod $t_1$;
 \item $g_1$ is odd, $\gcd(d_1,t_1)=1=\gcd(n_1,2t_1)$ and $-d_1/n_1$ is a quadratic residue mod $2t_1$.
 \end{enumerate}
 \item $|\M^n_{d,t}|=0$ otherwise.
\end{enumerate}
\end{thm}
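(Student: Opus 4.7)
The statement is quoted from Onorati's paper [O], so the plan is to sketch the moduli-theoretic-to-lattice-theoretic translation and then the orbit count that would give the formulas.

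My first step would be to reduce the counting of connected components of $\mathscr{M}_{d,t}^n$ to an orbit count on the abstract Kum$^n$-lattice $\Lambda\coloneqq U^{3}\oplus\langle -2n-2\rangle$ from Proposition \ref{ref: prop lattice Kum type ISV}. By the global Torelli theorem for hyperkähler manifolds combined with Markman's construction of a Hodge-theoretic marked moduli space, a connected component of $\mathscr{M}_{d,t}^n$ is determined by a monodromy orbit of a primitive vector $\alpha\in\Lambda$ with $q(\alpha)=2d$ and $\mathrm{div}(\alpha)=t$. Here the relevant monodromy group is the group $\mathrm{Mon}^2(\Lambda)$ of parallel-transport operators, which in the Kum$^n$ case is known (by work of Markman and Mongardi) to be a finite-index subgroup of $O^+(\Lambda)$ cut out by an explicit condition on the induced action on the discriminant group $A_\Lambda=\Lambda^\vee/\Lambda$. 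Thus $|\mathscr{M}_{d,t}^n|$ equals the number of $\mathrm{Mon}^2(\Lambda)$-orbits of primitive vectors with prescribed square $2d$ and divisibility $t$.

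The second step is to make this orbit count explicit via an Eichler-type criterion. Because $\Lambda$ contains at least two orthogonal copies of the hyperbolic plane, the $O^+(\Lambda)$-orbit of a primitive $\alpha$ is determined by its square together with the class $[\alpha/t]\in A_\Lambda$. Refining this to $\mathrm{Mon}^2(\Lambda)$-orbits gives one further invariant, living in a suitable quotient of $A_\Lambda$ by the monodromy-controlled subgroup. I would then rewrite $\alpha=a\lambda_n+b\delta$ as in Remark \ref{rmk: divisibility divides GCD}, so that $\mathrm{div}(\alpha)=\gcd(a,2(n+1)b)=t$; the numbers $d_1,n_1,g,g_1,t_1,w$ encode exactly the divisibility constraints on the pair $(a,b)$ together with the splitting of $t$ into the part $t_1$ coprime to $g$ and the rest. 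The non-emptiness/existence conditions in cases (1)--(3) then translate into solvability of $a^2 d_1+b^2 n_1\equiv 0$ modulo $t_1$ (resp.\ $2t_1$), i.e.\ to $-d_1/n_1$ being a quadratic residue; the orbit count itself becomes a count of the square roots of $-d_1/n_1$ in the relevant cyclic group, which via the Chinese Remainder Theorem produces the factor $2^{\rho(t_1)-1}$ (or $2^{\rho(t_1/2)-1}$) coming from the $\rho(t_1)$ prime factors, and the factor $w_+(t_1)\phi(w_-(t_1))$ coming from the action of the monodromy on the $w$-part of $A_\Lambda$.

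The third step is to separate the cases $t>2$ from $t\le 2$: when $t\le 2$ the involution $-\mathrm{id}$ lies in the monodromy and identifies $\alpha$ with $-\alpha$, collapsing all the a priori distinct orbits into one, which accounts for the uniform value $|\mathscr{M}_{d,t}^n|=1$ in case (3). The parity dichotomy between subcases (a), (b), (c), (d) is governed by whether the binary form representing $2d$ on $\mathbb{Z}\lambda_n\oplus\mathbb{Z}\delta$ is even or odd modulo $2t_1$, which is controlled by the parities of $g_1$, $t_1$, $d_1$. I expect the main obstacle to be precisely this bookkeeping: carefully identifying the image of $\mathrm{Mon}^2(\Lambda)$ in $\mathrm{O}(A_\Lambda)$, tracking how this image acts on the set of $t$-divisible classes, and then separating the $2$-adic behaviour from the odd-part behaviour so that the four quadratic-residue subcases and the two prefactor shapes $2^{\rho(t_1)-1}$ versus $2^{\rho(t_1/2)-1}$ come out correctly. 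The remaining verifications are essentially elementary manipulations with quadratic residues and the Chinese Remainder Theorem, and case (4) is then a clean-up statement saying that outside of the listed congruence conditions no primitive vector with the required invariants exists at all.
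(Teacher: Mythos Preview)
The paper does not prove this theorem at all: it is stated as a citation of \cite[Thm.\ 5.3]{O} and used as a black box, with no proof or sketch given in the present article. So there is no ``paper's own proof'' to compare your proposal against; your sketch is an outline of Onorati's argument rather than of anything in this paper.

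That said, your outline is broadly the right shape for how such a result is proved: reduction via global Torelli and Markman's theory to counting monodromy orbits of primitive vectors in $\Lambda=U^{\oplus 3}\oplus\langle -2n-2\rangle$ with fixed square and divisibility, then an Eichler-type criterion to reduce to a computation in the discriminant group, and finally elementary number theory to produce the factors $w_+(t_1)\phi(w_-(t_1))$ and $2^{\rho(t_1)-1}$. One point to be careful with: your third step claims that for $t\le 2$ the presence of $-\mathrm{id}$ in the monodromy ``collapses all the a priori distinct orbits into one''. This is not quite the mechanism; $-\mathrm{id}$ identifies at most pairs of orbits, not arbitrarily many, so by itself it cannot force $|\mathscr{M}^n_{d,t}|=1$. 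The actual reason the count is $1$ when $t\le 2$ is that in those cases $t_1\in\{1,2\}$, so $\rho(t_1)\le 1$ and $w_+(t_1)=\phi(w_-(t_1))=1$, and the formula already gives $1$ (with the further collapse from $2$ to $1$ in the borderline case handled by the sign). If you want a self-contained proof you should consult \cite{O} directly.
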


In the case $n\in \{2, 3, 4\}$, Theorem \ref{thm conn comp of mod space} implies the connectedness of $\M^n_{d,t}$:
\begin{cor}
\label{cor: moduli is connected}
Let $n\in\{2,3, 4\}$ and let $t,d>0$ be integers such that $\M^n_{d,t}$ is non-empty. Then, $\M^n_{d,t}$ is connected.
\begin{proof}
We do the computation only for $n=2$, the  other cases can be checked in the same way. By Remark \ref{rmk: moduli space empty}, the moduli space $\M^n_{d,t}$ is non-empty by assumption only if $t$ divides $\gcd(2d,2n+2)$. For $n=2$, we then have to consider only the cases where $t \in \{1,2,3,6\}$.
\smallskip

If $t=1,2$, the moduli space $\M^2_{d,t}$ is connected by the case $(3)$ of Theorem \ref{thm conn comp of mod space}.

\smallskip
If $t=3$, then $g=\frac{\gcd(2d,6)}{3}=2$, $w=\gcd(2,3)=1, g_1=2$, and $t_1=3$. Therefore, since $g_1$ is even and since we are assuming that $\M^2_{d,t}$ is non-empty, we are in the case $(1.a)$ of Theorem $\ref{thm conn comp of mod space}$. From $w_+(t_1)=w_{-}(t_1)=1$, we then deduce that
 \[
 |\M^2_{d,3}|=1\phi(1)2^{\rho(3)-1}=1.
 \]
 
 If $t=6$, then $g=\frac{\gcd(2d,6)}{6}=1$, $w=\gcd(1,6)=1, g_1=1$, and $t_1=6$. Therefore, since $g_1$ is odd and $t_1$ is even and since we are assuming that $\M^2_{d,t}$ is non-empty, we are in the case $(2)$ of Theorem $\ref{thm conn comp of mod space}$. As $w_+(t_1)=w_{-}(t_1)=1$, we deduce that
 \[
|\M^2_{d,6}|=1\phi(1)2^{\rho(6/2)-1}=1. \qedhere
 \]
 \end{proof}
\end{cor}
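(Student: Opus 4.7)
The plan is a direct finite case analysis using Theorem~\ref{thm conn comp of mod space}. By Remark~\ref{rmk: moduli space empty}, non-emptiness of $\M^n_{d,t}$ forces $t \mid 2n+2$, so for $n \in \{2,3,4\}$ the divisor $t$ lies in $\{1,2,3,6\}$, $\{1,2,4,8\}$, or $\{1,2,5,10\}$ respectively. This reduces the corollary to a short explicit list of sub-cases.

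First I would dispose of $t \le 2$ in bulk: whenever $\M^n_{d,t}$ is non-empty, part (3) of Theorem~\ref{thm conn comp of mod space} directly yields $|\M^n_{d,t}| = 1$, regardless of $n$. For the remaining pairs $(n,t) \in \{(2,3),(2,6),(3,4),(3,8),(4,5),(4,10)\}$ and each admissible value of $\gcd(2d,2n+2)$, I would compute the auxiliary invariants
\[
g = \tfrac{\gcd(2d,2n+2)}{t}, \quad w = \gcd(g,t), \quad g_1 = \tfrac{g}{w}, \quad t_1 = \tfrac{t}{w},
\]
determine which of parts (1) or (2) of Theorem~\ref{thm conn comp of mod space} is in force from the parities of $g_1$ and $t_1$, and evaluate the resulting component count.

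The numerical point making every surviving case collapse to $1$ is that $t_1$ turns out to be either a prime power or twice a prime, so the relevant exponent $\rho(t_1)-1$ (in case (1)) or $\rho(t_1/2)-1$ (in case (2)) vanishes, up to a compensation by the factor $w_+(t_1)$ in the edge case $t_1 = 2$; together with $w_-(t_1) = 1$, hence $\phi(w_-(t_1)) = 1$, the formula always evaluates to~$1$. The main obstacle is not conceptual but organizational: one must correctly match each $(n,t)$ and each admissible $\gcd(2d,2n+2)$ to the appropriate sub-case (1.a)--(1.c) or (2), using the standing non-emptiness hypothesis to rule out case (4) and to guarantee that the coprimality and quadratic-residue conditions on $d_1, n_1, t_1$ are automatically met. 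The verification for $n = 2$ already exhibits the full template, and the cases $n = 3, 4$ then follow by the same bookkeeping.
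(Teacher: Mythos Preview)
Your proposal is correct and follows essentially the same route as the paper: a direct case analysis on the admissible divisors $t$ of $2n+2$, disposing of $t\le 2$ via part~(3) of Theorem~\ref{thm conn comp of mod space} and evaluating the component-count formula of parts~(1) or~(2) in the remaining cases. The paper only spells out $n=2$ and leaves $n=3,4$ to the reader, whereas you sketch all three and add the unifying observation that $t_1$ is always a prime power or twice a prime (with the $t_1=2$ edge case handled by the compensating factor $w_+(t_1)=2$); this is a welcome clarification but not a different argument.
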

It is a standard fact that the moduli space $\M^n_{d,t}$ is smooth:
\begin{prop}\cite[Sec.\ 1.14]{Huy}
\label{prop M^n is smooth}
For every $n>1$ and every positive integers $d$ and $t$, the moduli space $\M^n_{d,t}$ is smooth.
\begin{proof}
Let $(X,L)$ be a point of $\M^n_{d,t}$, and let $\mathrm{Def}(X,L)$ be the universal deformation of the pair $(X,L)$ as in \cite[Sec.\ 1.14]{Huy}. Locally around $(X,L)$, the moduli space $\M^n_{d,t}$ is isomorphic to the open subset of $\mathrm{Def}(X,L)$ where the line bundle $L$ remains ample. The statement then follows from the fact that $\mathrm{Def}(X,L)$ is a smooth hypersurface of the universal deformation space $\mathrm{Def}(X)$.
\end{proof}
\end{prop}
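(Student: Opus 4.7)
The plan is to use the standard deformation theory of polarized hyperkähler manifolds, reducing smoothness of $\M^n_{d,t}$ to the fact that the polarized Kuranishi space is a smooth hypersurface inside the unpolarized one. Fix a point $(X,L)\in\M^n_{d,t}$.

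First I would invoke the Bogomolov--Tian--Todorov unobstructedness theorem (or, equivalently in this setting, Beauville's theorem for holomorphic symplectic manifolds), which yields that the Kuranishi space $\mathrm{Def}(X)$ is a smooth germ of dimension $h^{1,1}(X)$. By local Torelli for hyperkähler manifolds, the period map realizes a neighbourhood of the distinguished point of $\mathrm{Def}(X)$ as an open subset of the period domain
\[\Omega\coloneqq\{[\sigma]\in\mathbb{P}(H^2(X,\mathbb{C}))\mid q(\sigma,\sigma)=0,\ q(\sigma,\bar\sigma)>0\},\]
which is itself an open complex submanifold of the smooth quadric $\{q=0\}\subset\mathbb{P}(H^2(X,\mathbb{C}))$.

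Next, I would identify $\mathrm{Def}(X,L)$ with the locus in $\mathrm{Def}(X)$ where $L$ remains of Hodge type $(1,1)$; via the period description this is cut out by the single linear condition $q(\sigma,L)=0$, i.e.\ it is the intersection of $\Omega$ with the hyperplane $L^{\perp}\subset\mathbb{P}(H^2(X,\mathbb{C}))$. Because $q(L)=2d>0$, the class $L$ does not lie on the quadric and $L^{\perp}$ is transverse to it, so $L^{\perp}\cap\Omega$ is a smooth hypersurface inside $\Omega$. Combined with the openness of ampleness in families, this shows that $\M^n_{d,t}$ is locally at $(X,L)$ an open subset of the smooth germ $\mathrm{Def}(X,L)$, hence smooth.

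The main technical subtlety I expect is verifying that $L$ actually extends to a line bundle on the universal family over the whole of $\mathrm{Def}(X,L)$, so that the polarized moduli map is genuinely defined there. This should follow from the fact that the obstruction to lifting $L$ infinitesimally lies in $H^2(X,\0_X)$ and, via the Atiyah class, is identified with the $(0,2)$-component of $c_1(L)$ under deformation; this component vanishes precisely along the hypersurface $L^{\perp}\cap\Omega$, thereby allowing the lift.
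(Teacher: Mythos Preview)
Your proposal is correct and follows essentially the same approach as the paper: both argue that $\M^n_{d,t}$ is locally the open ample locus inside $\mathrm{Def}(X,L)$, and that $\mathrm{Def}(X,L)$ is a smooth hypersurface in the (unobstructed) Kuranishi space $\mathrm{Def}(X)$. The only difference is that the paper simply cites \cite[Sec.\ 1.14]{Huy} for the smoothness of $\mathrm{Def}(X,L)$, whereas you unpack that citation explicitly via the local Torelli description and the transversality of $L^{\perp}$ with the period quadric (using $q(L)=2d>0$).
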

As an immediate corollary we have the following:
\begin{cor}
\label{cor:conn comp are irreducible}
Every connected component of $\M^n_{d,t}$ is irreducible. In particular, for $n\in\{2,3,4\}$, the moduli space $\M^n_{d,t}$ is irreducible whenever it is non-empty.
\end{cor}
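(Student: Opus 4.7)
The plan is to combine Proposition~\ref{prop M^n is smooth} with Corollary~\ref{cor: moduli is connected}; the statement is essentially a formal consequence of these two results together with a standard fact about smooth schemes.

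First I would recall the topological lemma that on a smooth (hence locally Noetherian and locally irreducible) complex space, the irreducible components coincide with the connected components. Concretely, smoothness implies that every point has a connected open neighborhood which is irreducible, so any connected open subset of $\M^n_{d,t}$ cannot contain points belonging to two distinct irreducible components. This gives the first sentence: every connected component of $\M^n_{d,t}$ is irreducible.

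For the second sentence, I would simply invoke Corollary~\ref{cor: moduli is connected}, which ensures that for $n\in\{2,3,4\}$ and whenever $\M^n_{d,t}$ is non-empty the moduli space consists of a single connected component. Combined with the previous paragraph, this forces $\M^n_{d,t}$ itself to be irreducible in these dimensions.

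There is no serious obstacle here: the only subtlety worth spelling out is the equivalence between connected and irreducible components on a smooth space, which can either be cited or justified in one line via local irreducibility at smooth points. No new computation with the lattice or with the invariants $d_1, n_1, t_1, g_1$ is needed, since all the delicate enumeration has already been carried out in the proof of Corollary~\ref{cor: moduli is connected}.
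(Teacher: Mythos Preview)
Your proposal is correct and follows exactly the route the paper takes: the corollary is stated immediately after Proposition~\ref{prop M^n is smooth} as a formal consequence of smoothness (giving irreducibility of connected components) together with Corollary~\ref{cor: moduli is connected} (giving connectedness for $n\in\{2,3,4\}$). The only extra content you add is the one-line justification of why smoothness forces connected and irreducible components to coincide, which the paper leaves implicit.
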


In the following proposition, we recall the standard fact that base-point-freeness is an open property in families of hyperkähler manifolds:

\begin{prop}
\label{prop locally free is an open property in families}
Let $\zeta\colon \mathscr{X}\rightarrow S$ be a family of hyperkähler manifolds over a connected base $S$. Let $\mathscr{L}$ be a line bundle on $\mathscr{X}$ such that $q(\mathscr{L}_s)>0$ for all $s\in S$, where $\mathscr{L}_s$ denotes the restriction of $\mathscr{L}$ to the fibre $\mathscr{X}_s$. Then, if $\mathscr{L}_0$ is base-point-free for some $0\in S$, then $\mathscr{L}_s$ is base-point-free for all $s$ in an open neighbourhood of $0\in S$.
\begin{proof}
Let us begin by reducing to the case where $\mathscr{L}$ is a line bundle on $\mathscr{X}$ whose push-forward $\zeta_*\mathscr{L}$ is a free sheaf of rank $h^0(\mathscr{L}_0)$:
By assumption, the line bundle $\mathscr{L}_0$ is base-point-free and satisfies $q(\mathscr{L}_0)>0$. This implies that $\mathscr{L}_0$ is big and nef by the Beauville--Fujiki Relation \cite[Prop.\ 23.14]{GHJ}. Therefore, $h^i(\mathscr{L}_0)=0$ for all $i>0$ by the Kodaira vanishing theorem. The semicontinuity theorem then implies that $h^i(\mathscr{L}_s)=0$ for all $i>0$ and all $s$ in a neighbourhood $U$ of $0\in S$, see \cite[Thm.\ III.12.8]{Ha}. Since the Euler characteristic $\chi(\mathscr{L}_s)$ does not depend on $s\in S$ by flatness, $h^0(\mathscr{L}_s)$ is constant in the neighbourhood $U$.
Therefore, the push forward $(\zeta|_{\zeta^{-1}U})_*(\mathscr{L}|_{\zeta^{-1}U})$ is a locally free sheaf of rank $m=h^0(\mathscr{L}_0)$ by Grauert theorem, see \cite[Cor.\  III.12.9]{Ha}.
After substituting $S$ with a suitable open neighbourhood of $0$ and $\mathscr{X}$ with the preimage via $\zeta$ of this neighbourhood, we may assume that $\mathscr{L}$ is a line bundle on $\mathscr{X}$ such that \[\zeta_*\mathscr{L}\simeq\0_S^{\oplus m}.\]
Let $\{e_1,\ldots,e_m\}$ be the standard basis of the $H^0(\0_S)$-module $H^0(\0_S^{\oplus m})$. Since $H^0(\mathscr{L})\simeq H^0(\0_S^{\oplus m})$, we may view $\{e_1,\ldots,e_m\}$ as a basis of $H^0(\mathscr{L})$. The base locus of $|\mathscr{L}|$ is then
\[\mathrm{BL}(|\mathscr{L}|)=\displaystyle\bigcap_{i=1}^m V(e_i)\subseteq\mathscr{X}.\]
The set $\zeta(\mathrm{BL}(|\mathscr{L}|))\subseteq S$ is closed since $\zeta$ is a proper map and $\mathrm{BL}(|\mathscr{L}|)$ is closed. By assumption, the line bundle $\mathscr{L}_0$ is base-point-free. Therefore, the element $0\in S$ does not belong to $\zeta(\mathrm{BL}(|\mathscr{L}|)$ since $H^0(\mathscr{L})$ surjects onto $H^0(\mathscr{L}_0)$ via restriction.
This allows us to conclude that $W\coloneqq S\setminus \zeta(\mathrm{BL}(|\mathscr{L}|)$ is an open neighbourhood of $0\in S$ such that for all $s\in W$ the line bundle $\mathscr{L}_s$ is base-point-free.
\end{proof}
\end{prop}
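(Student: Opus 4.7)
The plan is to reduce to a situation where global sections of $\mathscr{L}$ restrict onto a full basis of $H^0(\mathscr{L}_s)$ on every fiber in a neighborhood of $0$, and then exploit the properness of $\zeta$ to propagate the base-point-freeness from $0$ to nearby $s$.

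First I would establish that the higher cohomologies of $\mathscr{L}_s$ vanish near $0$, which is the input needed to control the pushforward. Since $\mathscr{L}_0$ is base-point-free with $q(\mathscr{L}_0)>0$, the Beauville--Fujiki relation (combined with the fact that a base-point-free line bundle is nef) gives that $\mathscr{L}_0$ is big and nef, so Kodaira vanishing yields $h^i(\mathscr{L}_0)=0$ for $i>0$. By semicontinuity, this vanishing persists in an open neighborhood $U$ of $0$; since flatness of $\zeta$ forces $\chi(\mathscr{L}_s)$ to be locally constant, $h^0(\mathscr{L}_s)$ is constant on $U$. Grauert's theorem then makes $\zeta_*\mathscr{L}$ locally free of rank $m=h^0(\mathscr{L}_0)$ on $U$, with the natural base-change map $(\zeta_*\mathscr{L})\otimes k(s)\to H^0(\mathscr{L}_s)$ an isomorphism for $s\in U$. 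After shrinking $U$, I may assume $\zeta_*\mathscr{L}\simeq \mathcal{O}_U^{\oplus m}$.

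With this trivialization in hand, I would take the standard basis $e_1,\ldots,e_m$ of $\zeta_*\mathscr{L}$, view them as global sections of $\mathscr{L}$ on $\zeta^{-1}(U)$, and consider the scheme-theoretic common zero locus $\mathrm{BL}:=\bigcap_{i=1}^m V(e_i)\subseteq \zeta^{-1}(U)$, which is closed. By the base-change isomorphism, the $e_i$ restrict to a basis of $H^0(\mathscr{L}_s)$ for every $s\in U$, so a point $x\in \mathscr{X}_s$ lies in $\mathrm{BL}$ if and only if $x$ is a base point of $\mathscr{L}_s$. The image $\zeta(\mathrm{BL})\subseteq U$ is closed because $\zeta$ is proper, and it does not contain $0$ by hypothesis; hence $U\setminus \zeta(\mathrm{BL})$ is the sought open neighborhood of $0$ on which every $\mathscr{L}_s$ is base-point-free.

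The main obstacle is the base-change step: without the vanishing of the higher cohomologies of $\mathscr{L}_0$ one cannot conclude that $\zeta_*\mathscr{L}$ is locally free of the expected rank or that its formation commutes with restriction to fibers, and then the global sections $e_i$ might fail to detect base points on nearby fibers. Everything else in the argument is a formal consequence of properness once this cohomological input is in place; this is precisely why the hypothesis $q(\mathscr{L}_s)>0$ (which via Beauville--Fujiki turns base-point-freeness of $\mathscr{L}_0$ into bigness and nefness) is needed.
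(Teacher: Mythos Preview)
Your proof is correct and follows essentially the same approach as the paper: establish vanishing of higher cohomology at $0$ via big-and-nef plus Kodaira, use semicontinuity and Grauert to trivialize $\zeta_*\mathscr{L}$ locally, then take the common zero locus of a frame and push its image down by properness. If anything, your version is slightly more explicit about the base-change isomorphism ensuring the $e_i$ restrict to a basis on each nearby fiber, which is exactly the point the paper uses when it says $H^0(\mathscr{L})$ surjects onto $H^0(\mathscr{L}_0)$.
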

Let us deduce from Proposition \ref{prop locally free is an open property in families} the following criterion for generic base-point-freeness in the moduli space $\M_{d,t}^n$:
\begin{prop}
\label{prop sufficient condition to prove generic bpf}
Let $n>1$, and let $d$ and $t$ be positive integers such that the moduli space $\M_{d,t}^n$ is irreducible. Assume that there exists a pair $(X,L)$ where $X$ is a $\mathrm{Kum}^{n}$-type variety and $L$ is a base-point-free line bundle on $X$ with $q(L)=2d$ and $\mathrm{div}(L)=t$. Then, the polarization of a generic pair in $\M_{d,t}^n$ is base-point-free.
\begin{proof}
Proposition \ref{prop locally free is an open property in families} shows that base-point-freeness is an open property in families of polarized hyperkähler manifolds. Therefore, as the moduli space $\M_{d,t}^n$
is irreducible by assumption, we just need to find an element $(\Tilde{X},\Tilde{L})\in \M_{d,t}^n$ for which the line bundle $\Tilde{L}$ is base-point-free. 
Note that $L$ is not assumed to be ample, so the pair $(X,L)$ does not necessarily belong to $\M_{d,t}^n$.
Let $\mathscr{X}\rightarrow S$ be a family of hyperkähler manifolds of generic Picard rank one such that
$\mathscr{X}_0\simeq X$ for some $0\in S$, and for which there exists a line bundle $\mathscr{L}\in\mathrm{Pic}(\mathscr{X})$ such that $\mathscr{L}|_{\mathscr{X}_0}=L$. By the projectivity criterion (see \cite[Thm.\ 3.11]{Huy} and \cite[Thm.\ 2]{HE}), the variety $\mathscr{X}_s$
is projective for all $s\in S$.
From Proposition \ref{prop locally free is an open property in families}, we conclude that the line bundle $\mathscr{L}_s$ is base-point-free and ample  for general $s\in S$. For this last step, we use the fact that the Picard rank of $\mathscr{X}_s$ is one for general $s\in S$, and the fact that $-\mathscr{L}_s$ cannot be ample since $\mathscr{L}_s$ has sections.
In particular, for general $s\in S$, the pair $(\mathscr{X}_s,\mathscr{L}_s)$ belongs to $\M^n_{d,t}$ and $\mathscr{L}_s$ is base-point-free. This concludes the proof.
\end{proof}
\end{prop}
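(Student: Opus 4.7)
The plan is to reduce the proposition to Proposition \ref{prop locally free is an open property in families} via a deformation argument that replaces the hypothetical pair $(X,L)$ (in which $L$ is only required to be base-point-free, not ample) by a nearby pair in which the polarization is both ample and base-point-free. Since $\M_{d,t}^n$ is irreducible, producing a single element $(\widetilde{X},\widetilde{L})\in\M_{d,t}^n$ with $\widetilde{L}$ base-point-free will suffice by the openness statement.

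I would first construct a smooth connected family $\zeta\colon\mathscr{X}\to S$ of hyperkähler manifolds of $\mathrm{Kum}^n$-type, together with a line bundle $\mathscr{L}\in\mathrm{Pic}(\mathscr{X})$, such that $\mathscr{X}_0\simeq X$, $\mathscr{L}|_{\mathscr{X}_0}=L$, and the very general fibre has Picard rank one generated by $\mathscr{L}_s$. This is standard: take a small disc inside the local deformation space $\mathrm{Def}(X,L)$ of the pair, i.e.\ the hypersurface in $\mathrm{Def}(X)$ along which $L$ stays of type $(1,1)$, intersected with the complement of the countably many proper analytic subsets corresponding to additional Hodge classes of type $(1,1)$. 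Since $q(L)=2d>0$ and $L$ has sections (hence is non-trivial), the projectivity criterion cited from \cite[Thm.\ 3.11]{Huy} and \cite[Thm.\ 2]{HE} applies, so every fibre $\mathscr{X}_s$ is projective.

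Next I would apply Proposition \ref{prop locally free is an open property in families} to the family $(\mathscr{X},\mathscr{L})$: since $q(\mathscr{L}_s)=q(L)=2d>0$ is constant along $S$ and the central fibre $\mathscr{L}_0=L$ is base-point-free by hypothesis, there is an open neighbourhood $W\subseteq S$ of $0$ such that $\mathscr{L}_s$ is base-point-free for every $s\in W$. For a very general $s\in W$ the Picard rank of $\mathscr{X}_s$ equals one, generated by $\mathscr{L}_s$. Because $\mathscr{X}_s$ is projective, the generator of $\mathrm{Pic}(\mathscr{X}_s)$ must be either ample or anti-ample; but $\mathscr{L}_s$ possesses non-zero global sections (it is base-point-free and $h^0$ is constant on $W$), so it cannot be anti-ample. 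Hence $\mathscr{L}_s$ is ample, and its divisibility equals $t$ by Remark \ref{ref: rmk divisibility is invariant under deformations}. The pair $(\mathscr{X}_s,\mathscr{L}_s)$ therefore lies in $\M_{d,t}^n$ and has a base-point-free polarization, providing the required element.

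The main subtlety I would watch carefully is that the starting $L$ need not be ample, so without the deformation step we cannot directly invoke the openness proposition inside $\M_{d,t}^n$ itself; the argument crucially uses the separate projectivity criterion to push the base-point-free line bundle into the ample cone at a general point of the family, and uses irreducibility of $\M_{d,t}^n$ to spread base-point-freeness from one pair to a dense open subset of the whole moduli space.
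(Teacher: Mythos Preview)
Your proposal is correct and follows essentially the same approach as the paper: deform the pair $(X,L)$ inside $\mathrm{Def}(X,L)$ to a family of generic Picard rank one, invoke the projectivity criterion and Proposition \ref{prop locally free is an open property in families} to obtain a nearby fibre where the line bundle is both base-point-free and ample, and then use irreducibility of $\M_{d,t}^n$ to conclude. The only differences are cosmetic---you spell out the construction of the family and explicitly cite Remark \ref{ref: rmk divisibility is invariant under deformations} for the invariance of the divisibility, whereas the paper leaves these implicit.
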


\section{Divisibility one}
\label{section divisibility one}

In this section, we prove generic base-point-freeness in the case where the divisibility of the polarization is one. The results of this section hold in all dimensions.
\begin{thm}
\label{thm bpf for divisibility 1}
The moduli space $\M^n_{d,1}$ is non-empty and irreducible for all $n>1$ and $d>0$, and the polarization of a generic pair in $\M^n_{d,1}$ is base-point-free.
\end{thm}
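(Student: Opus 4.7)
The plan is to reduce to Proposition \ref{prop sufficient condition to prove generic bpf} and then exhibit one explicit base-point-free line bundle with the desired invariants. Non-emptiness and connectedness of $\M^n_{d,1}$ follow from Ohashi's Theorem \ref{thm conn comp of mod space}: for $t=1$ one has $t_1=1$, so the coprimality and quadratic-residue conditions in case (3.a) are automatically satisfied and $|\M^n_{d,1}|=1$. Combined with Proposition \ref{prop M^n is smooth}, this yields irreducibility, so Proposition \ref{prop sufficient condition to prove generic bpf} reduces the theorem to producing one pair $(X,L)$ with $X$ of $\mathrm{Kum}^n$-type and $L$ base-point-free of square $2d$ and divisibility one.

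For the construction, pick an abelian surface $T$ carrying a primitive ample line bundle $M$ with $M^2=2d$ (such $T$ always exists, for instance on a suitable product of elliptic curves). Let $X\coloneqq\mathrm{Kum}^n(T)$ and let $M_n$ denote the line bundle on $X$ obtained by descending $M^{\boxtimes(n+1)}$ from $T^{n+1}$ to $T^{(n+1)}$, pulling back to $T^{[n+1]}$ and restricting to $X$. In the orthogonal decomposition of Proposition \ref{ref: prop lattice Kum type ISV}, the class of $M_n$ lies in the $H^2(T,\mathbb{Z})$-summand with vanishing $\delta$-coefficient; hence $q(M_n)=M^2=2d$ and Remark \ref{rmk: divisibility divides GCD} gives $\mathrm{div}(M_n)=1$ because $M$ is primitive.

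The heart of the argument is base-point-freeness of $M_n$ on $X$, which is nontrivial because $M$ itself need not be base-point-free on $T$. I would proceed via a translation trick. For every $a\in T$, the theorem of the square gives $t_a^*M\cong M\otimes P_a$ with $P_a\in\mathrm{Pic}^0(T)$, and the induced line bundle $(P_a)_n$ on $X$ is the pullback of $P_a$ along the sum morphism $X\hookrightarrow T^{[n+1]}\to T$; since $X$ is the fibre over $0\in T$ and $P_a$ is canonically trivial at $0$, this pullback is trivial. Consequently $(t_a^*M)_n$ is canonically isomorphic to $M_n$ on $X$, so every $s\in H^0(T,t_a^*M)$ yields a section of $M_n$ via the symmetric construction. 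Given $[Z]\in X$ with $\mathrm{Supp}(Z)=\{q_1,\ldots,q_k\}$ and a nonzero $s_0\in H^0(T,M)$ with divisor $D$, the section $(t_a^*s_0)^{(n+1)}$ fails to vanish at $[Z]$ precisely when $a\notin\bigcup_{i=1}^k(D-q_i)$; the right-hand side is a finite union of divisors on the surface $T$, so such $a$ exists. The delicate point I expect is the non-reduced case, which is handled by the standard identification of the divisor of $s^{(n+1)}$ on $T^{[n+1]}$ with $\{[Z]:\mathrm{Supp}(Z)\cap V(s)\neq\emptyset\}$.
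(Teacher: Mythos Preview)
Your argument is correct and rests on the same translation idea as the paper, but the packaging differs in a way worth noting. The paper works on $K_T^{n}\cong T^n$, writes $M=(L^{\boxtimes n})\otimes(-1)^*\mu^*L$, and for each $x\in T$ constructs a translated line bundle $M_x$; it then verifies $M_x\cong M$ via an explicit Appell--Humbert computation (Proposition~\ref{Prop M_x sim M}), checks by hand that the resulting sections $s_x$ are $\mathfrak{S}_{n+1}$-symmetric (Lemma~\ref{lem s_x are symmetric}), and only then descends to $\overline{K^n}$ and pulls back to $\mathrm{Kum}^n(T)$. Your route is more conceptual: you stay on $T^{[n+1]}$, use the theorem of the square to write $t_a^*M\cong M\otimes P_a$, and observe that for $P\in\mathrm{Pic}^0(T)$ one has $P^{\boxtimes(n+1)}\cong\Sigma^*P$, whence $(P)_n=\sigma^*P$ is trivial on the fibre $X=\sigma^{-1}(0)$. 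This replaces the semicharacter computation and, because the sections $(t_a^*s_0)^{(n+1)}$ already live on $T^{[n+1]}$, makes the separate symmetry verification unnecessary. The endgame---avoiding a finite union of translates of $D$---is identical to Proposition~\ref{prop the induced line bundle on T^2 is bpf}. Your handling of the non-reduced case via the description of $V(s^{(n+1)})$ as the Hilbert--Chow preimage of the support condition is also fine.

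Two minor points. First, the result you cite as ``Ohashi's Theorem'' is due to Onorati (reference~\cite{O}). Second, for non-emptiness the paper gives an independent geometric argument (Lemma~\ref{M_{d,1} is non-empty}) via deformation, whereas you extract it directly from case~(3.a) of Theorem~\ref{thm conn comp of mod space}; both are valid, and your observation that $t_1=1$ trivializes all the arithmetic conditions is the cleanest way to see it.
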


Before diving into the proof of Theorem \ref{thm bpf for divisibility 1}, let us recall some results on abelian varieties that we will use throughout this section, see \cite{BL} for all the necessary definitions and notations. A line bundle on an abelian variety is determined by its first Chern class and its semicharacter
by the Appell--Humbert theorem \cite[Thm.\ 1.2.3]{BL}. Denote by $\mathscr{L}(H,\chi)$ the unique line bundle whose first Chern class is $H$ and whose semicharacter is $\chi$. Then, the following results hold:
\begin{lem}\cite[Lem.\ 2.3.2]{BL}
\label{ref:lemma on line bundles and translations}
Let $T\coloneqq V/\Lambda$ be an abelian variety and let $\mathscr{L}(H,\chi)\in \mathrm{Pic}(T)$ be a line bundle on $T$. Then, for every $\overline{v}\in T$ with representative $v\in V$ the following holds
\[t_{\overline{v}}^*\mathscr{L}(H,\chi)=\mathscr{L}(H,\chi \mathrm{exp}(2\pi i \mathrm{Im}H(v,\cdot))),\]
where $t_{\overline{v}}$ is the translation on $T$ induced by the translation $t_v$ on $V$ which sends a vector $x \in V$ to the vector $v+x$.
\qed
\end{lem}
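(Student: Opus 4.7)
The plan is to prove the lemma by directly manipulating the cocycle description of line bundles on $T=V/\Lambda$ furnished by the Appell--Humbert theorem. Recall that $\mathscr{L}(H,\chi)$ is realised as the quotient of $V\times\mathbb{C}$ by the $\Lambda$-action given by the factor of automorphy
\[
a_\lambda(x)=\chi(\lambda)\exp\!\bigl(\pi H(x,\lambda)+\tfrac{\pi}{2}H(\lambda,\lambda)\bigr),
\]
and two factors of automorphy define isomorphic line bundles precisely when they differ by a coboundary of the form $f(x+\lambda)/f(x)$ for some holomorphic $f\colon V\to\mathbb{C}^{\times}$.

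The first step is to compute the factor of automorphy for the pullback bundle. Since the translation $t_v$ on $V$ descends to $t_{\overline v}$ on $T$, the pullback $t_{\overline v}^{\ast}\mathscr{L}(H,\chi)$ is the quotient of $V\times\mathbb{C}$ by the shifted cocycle $a_\lambda(v+x)$, and linearity of $H$ in the first variable yields
\[
a_\lambda(v+x)=\exp\!\bigl(\pi H(v,\lambda)\bigr)\cdot a_\lambda(x).
\]
The second step is to factor the prefactor $\exp(\pi H(v,\lambda))$ as the product of a coboundary and the desired semicharacter twist $\exp(2\pi i\,\mathrm{Im}\,H(v,\lambda))$. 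Splitting $H(v,\lambda)$ into real and imaginary parts and taking $f(x)=\exp\!\bigl(-\pi H(x,v)-\tfrac{\pi}{2}H(v,v)\bigr)$ (or an equivalent natural choice dictated by the convention for conjugate-linearity), the coboundary $f(x+\lambda)/f(x)$ absorbs the ``real part'' contribution exactly, leaving the cocycle of $\mathscr{L}\bigl(H,\;\chi\cdot\exp(2\pi i\,\mathrm{Im}\,H(v,\cdot))\bigr)$ as the residual factor.

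Along the way one must check that $\chi\cdot\exp(2\pi i\,\mathrm{Im}\,H(v,\cdot))$ is indeed a semicharacter for $H$: this follows because $\mathrm{Im}\,H$ is $\mathbb{Z}$-valued on $\Lambda\times\Lambda$, so $\exp(2\pi i\,\mathrm{Im}\,H(v,\cdot))$ is an honest character of $\Lambda$ and multiplying $\chi$ by it preserves the defining twisted-homomorphism identity. The main obstacle is the bookkeeping in the coboundary factorisation: one has to be careful about which argument of $H$ is $\mathbb{C}$-linear versus conjugate-linear and to match the imaginary parts exactly. As a conceptual cross-check, the result also follows from the theorem of the square, which identifies $t_{\overline v}^{\ast}\mathscr{L}(H,\chi)\otimes\mathscr{L}(H,\chi)^{-1}$ with the topologically trivial line bundle $\phi_L(\overline v)\in\mathrm{Pic}^0(T)$, and the latter corresponds under the Appell--Humbert parametrisation of $\mathrm{Pic}^0(T)$ exactly to the character $\lambda\mapsto\exp(2\pi i\,\mathrm{Im}\,H(v,\lambda))$.
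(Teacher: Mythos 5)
Your argument is correct and is essentially the standard factor-of-automorphy proof from the cited reference \cite[Lem.\ 2.3.2]{BL}; the paper itself does not reprove this lemma but simply quotes it, so there is nothing to diverge from. One small imprecision: $\exp(2\pi i\,\mathrm{Im}\,H(v,\cdot))$ is an honest character of $\Lambda$ because $\lambda\mapsto\mathrm{Im}\,H(v,\lambda)$ is additive in $\lambda$, not because $\mathrm{Im}\,H$ is $\mathbb{Z}$-valued on $\Lambda\times\Lambda$ (the integrality is what makes $\chi$ itself a semicharacter); with the choice $f(x)=\exp(-\pi H(x,v))$, which is holomorphic since $H$ is $\mathbb{C}$-linear in its first argument, the coboundary $f(x+\lambda)/f(x)=\exp(-\pi H(\lambda,v))$ converts the prefactor $\exp(\pi H(v,\lambda))$ into $\exp(\pi H(v,\lambda)-\pi\overline{H(v,\lambda)})=\exp(2\pi i\,\mathrm{Im}\,H(v,\lambda))$ exactly as you indicate.
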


\begin{lem}\cite[Lem.\ 2.3.4]{BL}
\label{lem line bundle and pullback via homom.}
Let $f\colon T\rightarrow T'$ be a morphism of abelian varieties with analytic representation $F\colon V\rightarrow V'$. Then, for any $\mathscr{L}(H,\chi)\in \mathrm{Pic}(X')$,
\[\pushQED{\qed} f^*\mathscr{L}(H,\chi)=\mathscr{L}(F^*H,F^*\chi).\qedhere
\popQED\]
\end{lem}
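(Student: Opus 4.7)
The plan is to use the automorphy factor description of line bundles on complex tori that underlies the Appell--Humbert theorem. Write $T = V/\Lambda$ and $T' = V'/\Lambda'$, so that $f$ lifts to the $\mathbb{C}$-linear map $F:V\to V'$ with $F(\Lambda)\subseteq \Lambda'$. Recall that $\mathscr{L}(H,\chi)$ on $T'$ is realized as the quotient of the trivial bundle $V'\times\mathbb{C}$ by the $\Lambda'$-action given by the canonical factor of automorphy
\[
a_{\lambda'}(v') = \chi(\lambda')\exp\!\left(\pi H(v',\lambda') + \tfrac{\pi}{2}H(\lambda',\lambda')\right),\qquad \lambda'\in\Lambda',\ v'\in V'.
\]
Under the isomorphism $\mathrm{Pic}(T')\simeq H^1(\Lambda',H^0(\mathcal{O}^*_{V'}))$, pulling back a line bundle amounts to pulling the corresponding cocycle back along $F$.

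First I would write the pullback $f^*\mathscr{L}(H,\chi)$ explicitly. Since the analytic representation of $f$ is $F$, the fibre product realizing the pullback identifies, after choosing the natural lift, with the quotient of $V\times\mathbb{C}$ by the $\Lambda$-action
\[
\lambda\cdot(v,z) = \bigl(v+\lambda,\ a_{F(\lambda)}(F(v))\,z\bigr).
\]
Thus the factor of automorphy of $f^*\mathscr{L}(H,\chi)$ is $b_\lambda(v) := a_{F(\lambda)}(F(v))$. Substituting the defining formulas $(F^*H)(u,w) := H(F(u),F(w))$ and $(F^*\chi)(\lambda) := \chi(F(\lambda))$, a direct computation gives
\[
b_\lambda(v) = (F^*\chi)(\lambda)\exp\!\left(\pi(F^*H)(v,\lambda) + \tfrac{\pi}{2}(F^*H)(\lambda,\lambda)\right),
\]
which is exactly the canonical factor of automorphy of $\mathscr{L}(F^*H,F^*\chi)$. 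Since the factor is in canonical form, the Appell--Humbert theorem yields $f^*\mathscr{L}(H,\chi)\simeq \mathscr{L}(F^*H,F^*\chi)$.

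It remains only to verify that $(F^*H,F^*\chi)$ is an admissible Appell--Humbert datum on $T$. Hermiticity of $F^*H$ is immediate; integrality of $\mathrm{Im}(F^*H)$ on $\Lambda$ follows from $F(\Lambda)\subseteq\Lambda'$ together with integrality of $\mathrm{Im}(H)$ on $\Lambda'$; and the semicharacter identity for $F^*\chi$ relative to $\mathrm{Im}(F^*H)$ follows by pulling back the semicharacter identity for $\chi$ along $F|_\Lambda$. The substitutions are completely routine; the only conceptual point that requires care is the compatibility of the analytic pullback functor $f^*$ with the Appell--Humbert equivalence, i.e.\ the observation that pullback of a line bundle corresponds on the level of cocycles to precomposition with $F$. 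Once this is in place the lemma reduces to the one-line computation above.
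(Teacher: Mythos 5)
Your argument is correct: realizing $\mathscr{L}(H,\chi)$ by its canonical factor of automorphy, noting that pullback along $f$ corresponds to precomposition of the cocycle with $F$ (using $F(\Lambda)\subseteq\Lambda'$), and reading off the resulting canonical factor as that of $\mathscr{L}(F^*H,F^*\chi)$ is exactly the standard proof. The paper itself gives no proof of this lemma --- it is quoted verbatim from \cite[Lem.\ 2.3.4]{BL} --- and your argument is essentially the one found in that reference, including the routine verification that $(F^*H,F^*\chi)$ is an admissible Appell--Humbert datum.
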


Let us start the proof of Theorem \ref{thm bpf for divisibility 1} by proving that $\M_{d,1}^n$ is always non-empty:
\begin{lem}
\label{M_{d,1} is non-empty}
The space $\M_{d,1}^n$ is non-empty for every $n>1$ and $d>0$.
\begin{proof}
Let $T$ be an abelian surface which admits a polarization $L$ with $L^2=2d$ and let $L_n$ be the induced line bundle on the generalized Kummer variety $\mathrm{Kum}^n(T)$. This line bundle is not necessarily ample, but  satisfies $\mathrm{div}(L_n)=1$ and $q(L_n)=2d$. By deforming the pair $(\mathrm{Kum}^n(T),L_n)$ and using the projectivity criterion (see \cite[Thm.\ 3.11]{Huy} and \cite[Thm.\ 2]{HE}) as in the proof of Proposition \ref{prop sufficient condition to prove generic bpf}, we find a hyperkähler manifold belonging to $\M_{d,1}^n$. Thus, this moduli space is non-empty.
\end{proof}
\end{lem}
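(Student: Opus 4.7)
The plan is to build an explicit example from an abelian surface and then deform into the moduli space. First I would choose an abelian surface $T$ equipped with a polarization $L$ satisfying $L^2=2d$; for instance, a product $T=E_1\times E_2$ of elliptic curves with polarization $p_1^*\mathscr{O}(1)\otimes p_2^*\mathscr{O}(d)$, or more generally any abelian surface carrying a polarization of type $(1,d)$, which always exists.

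Next I would form the induced line bundle $L_n$ on $\mathrm{Kum}^n(T)$ via the composition $\mathrm{Kum}^n(T)\hookrightarrow T^{[n+1]}$ together with the symmetric power construction. In the decomposition of Proposition \ref{ref: prop lattice Kum type ISV}(\textit{i}), the class of $L_n$ is $\lambda + 0\cdot\delta$, where $\lambda=c_1(L)\in H^2(T,\mathbb{Z})$ is primitive because $L$ is a polarization. Since $q$ restricted to the $H^2(T,\mathbb{Z})$-summand coincides with cup product, one computes $q(L_n)=L^2=2d$. Applying Remark \ref{rmk: divisibility divides GCD} with $a=1$ and $b=0$ then gives $\mathrm{div}(L_n)=\gcd(1,0)=1$, so $L_n$ has the correct numerical invariants.

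The point that requires care is that $L_n$ is typically only big and nef on $\mathrm{Kum}^n(T)$, not ample (the exceptional divisor $\delta$ is contracted), so the pair $(\mathrm{Kum}^n(T),L_n)$ does not by itself land in $\M_{d,1}^n$. To remedy this I would deform: consider a one-parameter family $\mathscr{X}\rightarrow S$ of $\mathrm{Kum}^n$-type manifolds, with $\mathscr{X}_0\simeq\mathrm{Kum}^n(T)$, whose generic fibre has Picard rank one and along which $L_n$ extends to a line bundle $\mathscr{L}\in\mathrm{Pic}(\mathscr{X})$ (i.e., stay inside the universal deformation of the pair $(\mathrm{Kum}^n(T),L_n)$). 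Because $q(\mathscr{L}_s)=q(L_n)=2d>0$, the projectivity criterion (\cite[Thm.\ 3.11]{Huy} and \cite[Thm.\ 2]{HE}) ensures $\mathscr{X}_s$ is projective for generic $s\in S$. On such a rank-one fibre $\mathscr{L}_s$ is automatically a primitive generator of $\mathrm{Pic}(\mathscr{X}_s)$ up to sign, and by upper semicontinuity $h^0(\mathscr{L}_s)\geq h^0(\mathscr{L}_0)>0$, which rules out $-\mathscr{L}_s$ being the ample class; hence $\mathscr{L}_s$ itself is ample.

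By Remark \ref{ref: rmk divisibility is invariant under deformations} the divisibility $\mathrm{div}(\mathscr{L}_s)=1$ is preserved, and by flatness $q(\mathscr{L}_s)=2d$. Therefore $(\mathscr{X}_s,\mathscr{L}_s)\in\M_{d,1}^n$, proving non-emptiness. The main subtlety is the projectivity/ampleness argument: I must both produce projective deformations of a polarized hyperkähler manifold and rule out the sign flip in the rank-one case. Both are handled by standard tools, but are the only non-formal ingredients in the proof.
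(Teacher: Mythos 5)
Your proposal follows essentially the same route as the paper: induce $L_n$ on $\mathrm{Kum}^n(T)$ from a polarization of square $2d$ on an abelian surface, observe that it has the right square and divisibility but need not be ample, and then deform to a generic Picard-rank-one fibre where the projectivity criterion and the existence of sections force $\mathscr{L}_s$ to be ample. One small slip: upper semicontinuity gives $h^0(\mathscr{L}_s)\leq h^0(\mathscr{L}_0)$ for generic $s$, not the inequality you wrote; the conclusion that $\mathscr{L}_s$ has sections still holds, but via Kodaira vanishing for the big and nef bundle $L_n$ together with constancy of the Euler characteristic, exactly as in Proposition \ref{prop locally free is an open property in families}.
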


Note that Corollary \ref{cor:conn comp are irreducible} shows that $\M^n_{d,1}$ is irreducible for every $n$ since by Theorem \ref{thm conn comp of mod space} the moduli space $\M^n_{d,t}$ is connected for every $n$ if $t=1$.
Therefore, we are in the situation of Proposition \ref{prop sufficient condition to prove generic bpf}, and, to prove generic base-point-freeness in $\M^n_{d,1}$, it suffices to find a pair $(X,L)$ where $X$ is a $\mathrm{Kum}^{n}$-type variety and $L$ is a base-point-free line bundle on $X$ with $q(L)=2d$ and $\mathrm{div}(L)=1$. We do this by showing that, given any effective line bundle $L$ on an abelian surface $T$\label{effective assumption} the line bundle $L_n$ induced on the generalized Kummer variety $\mathrm{Kum}^n(T)$ is base-point-free, see Theorem \ref{thm: the line bundle on Kum is bpf}.
\smallskip

Let us consider the following commutative diagram:
\begin{equation}
\label{scheme}
\begin{tikzcd}
\mathrm{Kum}^n(T) \arrow[r, hook] \arrow[d] & T^{[n+1]} \arrow[r, "\sigma"] \arrow[d] & T \\
\overline{K^{n}} \arrow[r, hook]                & T^{(n+1)} \arrow[ru]                          &   \\
K^{n}_T \arrow[r, hook] \arrow[u]           & T^{n+1} \arrow[ruu, "\Sigma"'] \arrow[u]     &  
\end{tikzcd},
\end{equation}
where the first column is the base change by $0$ of the second column and $\sigma$ and $\Sigma$ are the natural maps induced by the summation map on $T$. By construction, $K_T^{n}=\ker(\Sigma)$ and the following holds:
\begin{lem}
\label{lem on the diagram}
Let $T$ be an abelian surface and let $K^{n}_T$ as in (\ref{scheme}). 
Then, the map
\begin{equation*}
    \varphi\colon  T^n\rightarrow K^{n}_T,\qquad (a_1,\ldots,a_n)\mapsto (a_1,\ldots,a_n,-a_1-\ldots-a_n)
\end{equation*}
is an isomorphism.
\qed
\end{lem}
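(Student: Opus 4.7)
The statement is essentially that the kernel of the summation homomorphism $\Sigma\colon T^{n+1}\to T$ is parametrized by the first $n$ coordinates, the last coordinate being forced by the kernel condition. So the plan is just to exhibit an explicit two-sided inverse to $\varphi$.

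First I would note that $\varphi$ is a morphism of abelian varieties: each component is either a projection or the negative of a sum of projections, all of which are morphisms. Moreover $\Sigma\circ\varphi=0$, so $\varphi$ factors through the closed subvariety $K_T^n=\ker(\Sigma)\subseteq T^{n+1}$, yielding a morphism $\varphi\colon T^n\to K_T^n$.

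Next I would construct the inverse. Consider the morphism
\[\psi\colon K_T^n\to T^n,\qquad (a_1,\ldots,a_n,a_{n+1})\mapsto (a_1,\ldots,a_n),\]
obtained by restricting to $K_T^n$ the projection $T^{n+1}\to T^n$ onto the first $n$ factors. By construction $\psi\circ\varphi=\mathrm{id}_{T^n}$. For the other composition, any point $(a_1,\ldots,a_{n+1})\in K_T^n$ satisfies $a_1+\cdots+a_{n+1}=0$, hence $a_{n+1}=-a_1-\cdots-a_n$, which gives $\varphi\circ\psi=\mathrm{id}_{K_T^n}$ set-theoretically. Since $\Sigma$ is a smooth surjective homomorphism of abelian varieties, its kernel $K_T^n$ is a smooth variety (in particular reduced), so the equality of morphisms $\varphi\circ\psi$ and $\mathrm{id}_{K_T^n}$ can be checked on points. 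This gives $\varphi$ and $\psi$ as mutual inverses, proving the lemma.

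There is no real obstacle here: the content is purely the standard fact that a group-theoretic splitting of a short exact sequence of abelian varieties is automatically scheme-theoretic once both sides are smooth. The only subtlety worth flagging is the passage from the set-theoretic identity $\varphi\circ\psi=\mathrm{id}$ to the morphism-level identity, which uses the reducedness of $K_T^n$.
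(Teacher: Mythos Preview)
Your argument is correct. The paper omits the proof entirely (the lemma ends with a bare \qed), treating the statement as immediate; your explicit construction of the inverse $\psi$ via projection onto the first $n$ factors is exactly the standard verification one would supply, and the care you take with reducedness is appropriate (though one could also bypass it by observing that $\Sigma|_{K_T^n}=0$ holds scheme-theoretically by the very definition of the kernel, so $a_{n+1}=-a_1-\cdots-a_n$ as morphisms on $K_T^n$ without appealing to points).
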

\begin{rmk}
\label{rmk on S_3 action}
Note that $K^{n}_T$ is a subtorus of $T^{n+1}$ since it is the kernel of the morphism $\Sigma$. Moreover, the canonical action of the symmetric group $\mathfrak{S}_{n+1}$ on $T^{n+1}$ preserves $K^{n}_T$ and induces an action on it.
Identifying the torus $T^n$ with $K^{n}_T$ via the map $\varphi$, we see that the induced action of $\mathfrak{S}_{n+1}$ on $T^n$ is generated by the elements
\[
[(a_1,\ldots,a_n)\mapsto (a_i,a_2,\ldots,a_{i-1},a_1,a_{i+1},\ldots a_n)]\quad \text{for } i=1,\ldots, n, \quad \text{and}\]
\[[(a_1,\ldots,a_n)\mapsto (-a_1-\ldots-a_n,a_2,\ldots,a_n)].\]
\end{rmk}

Let $k$ be a positive integer. Given a line bundle $L$ on $T$, denote by $L^{\boxtimes k}\coloneqq L\boxtimes\ldots\boxtimes L$ the induced line bundle on $T^k$. Note that, via the map $\varphi$, the line bundle $L^{\boxtimes (n+1)}|_{K^{n}_T}$ is sent to
\[M\coloneqq (L^{\boxtimes n})\otimes (-1)^*\mu^*L,\]
where $\mu\colon T^n\rightarrow T$ is the addition map and $(-1)\colon T^n\rightarrow T^n$ is minus identity.
\smallskip

In the next proposition, we find line bundles $M_x$ on $T^n$ which are isomorphic to $M$:
\begin{prop}
\label{Prop M_x sim M}
Let $L$ be a line bundle on an abelian surface $T$, and 
let $M$ be as above. Then, for all $x\in T$, the line bundle
\[M_x\coloneqq t_{(nx,\ldots, nx)}^*(L^{\boxtimes n})\otimes t_{(-x,\ldots,-x)}^*(-1)^*\mu^*L\]
is isomorphic to the line bundle $M$.
\begin{proof}
By the Appell--Humbert theorem \cite[Thm.\ 1.2.3]{BL}, it suffices to check that the two line bundles $M$ and $M_x$ have the same first Chern class and the same semicharacter.
Let us assume that $L=\mathscr{L}(H,\chi)$ and denote by $\mathrm{pr}_i\colon T^n\rightarrow T$ the projection onto the $i$-th factor. By Lemma \ref{lem line bundle and pullback via homom.}, we have the following equalities:
\begin{align*}
    L&^{\boxtimes n}=\mathscr{L}\left(\displaystyle\sum_{i=1}^n \mathrm{pr}_i^*H,\prod_{i=1}^n \mathrm{pr}_i^*\chi\right),\\
&(-1)^*\mu^*L=\mathscr{L}\left(\mu^*H,\frac{1}{\mu^*\chi }\right).
\end{align*}

The second equality follows from Lemma \ref{lem line bundle and pullback via homom.} noting that $(-1)^*H=H$ and $(-1)^*\chi=\frac{1}{\chi}$.
We then deduce that
\[M=\mathscr{L}\left(\displaystyle\sum_{i=1}^n \mathrm{pr}_i^*H+\mu^*H,\frac{\prod_{i=1}^n \mathrm{pr}_i^*\chi}{\mu^*\chi}\right).\]
On the other hand, applying Lemma \ref{ref:lemma on line bundles and translations}, we see that
\[t_{(nx,\ldots,nx)}^*(L^{\boxtimes n})=\mathscr{L}\left(\displaystyle\sum_{i=1}^n \mathrm{pr}_i^*H,\prod_{i=1}^n \mathrm{pr}_i^*\chi\cdot e^{2\pi i \mathrm{Im}((\sum_{i=1}^n \mathrm{pr}_i^*H)((nx,\ldots,nx),\cdot))}\right),\]
\[
t_{(-x,\ldots,-x)}^*(-1)^*\mu^*L=\mathscr{L}\left(\mu^*H,\frac{e^{2\pi i \mathrm{Im}(\mu^*H((-x,\ldots,-x),\cdot))}}{\mu^*\chi}\right).
\]
Hence, the Hermitian forms of $M$ and $M_x$ coincide.
To see that also their semicharacters coincide just note that, for all $(a_1,\ldots,a_n)\in T^n$, the following holds:
\begin{align*}
   (\displaystyle\sum_{i=1}^n \mathrm{pr}_i^*H)((nx,&\ldots,nx),(a_1,\ldots,a_n))+\mu^*H((-x,\ldots,-x),(a_1,\ldots,a_n))=\\
   &=\displaystyle\sum_{i=1}^n H(nx,a_i)+H\left(-nx,\sum_{i=1}^n a_i\right)=0. \qedhere
\end{align*}
\end{proof}
\end{prop}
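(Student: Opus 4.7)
My plan is to invoke the Appell--Humbert theorem \cite[Thm.\ 1.2.3]{BL}, which characterizes a line bundle on the abelian variety $T^n$ by its first Chern class (a Hermitian form $H$) together with a compatible semicharacter $\chi$. It therefore suffices to check that $M$ and $M_x$ correspond to the same pair $(H,\chi)$.

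Writing $L=\mathscr{L}(H,\chi)$, I would first handle the first Chern classes. Lemma \ref{ref:lemma on line bundles and translations} shows that translations preserve the Hermitian form, so the translations appearing in the definition of $M_x$ are invisible at this level. Combining this observation with Lemma \ref{lem line bundle and pullback via homom.} and the identity $(-1)^*H=H$, both $M$ and $M_x$ have first Chern class $\sum_{i=1}^n \mathrm{pr}_i^*H+\mu^*H$.

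For the semicharacters the translations do contribute, via the explicit formula in Lemma \ref{ref:lemma on line bundles and translations}. Using also $(-1)^*\chi=1/\chi$, the semicharacter of $M_x$ differs from that of $M$ exactly by the multiplicative factor
\[\exp\!\Bigl(2\pi i\,\mathrm{Im}\Bigl[\bigl(\textstyle\sum_{i=1}^n \mathrm{pr}_i^*H\bigr)((nx,\ldots,nx),\cdot)+\mu^*H((-x,\ldots,-x),\cdot)\Bigr]\Bigr).\]
The substantive step is to show this factor equals $1$ identically. Evaluating at an arbitrary point $(a_1,\ldots,a_n)\in T^n$, the bracket collapses to $\sum_{i=1}^n H(nx,a_i)+H(-nx,\sum_i a_i)$, which vanishes by linearity of $H$ in the second argument. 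This cancellation explains the particular scalings of the translations: the factor $n$ in $(nx,\ldots,nx)$ precisely compensates for the $n$ summands produced by $\mu$.

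I expect no serious obstacle: once both line bundles are placed in the Appell--Humbert normal form $\mathscr{L}(H,\chi)$, the argument is a mechanical verification using the two pullback/translation lemmas already recalled, together with the standard identities $(-1)^*H=H$ and $(-1)^*\chi=1/\chi$. The only conceptual point is to recognise that the proposition is really a statement about the carefully tuned geometry of the translations, not a statement that requires any deep property of $L$ itself.
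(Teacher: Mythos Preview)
Your proposal is correct and follows essentially the same approach as the paper: both reduce to the Appell--Humbert data $(H,\chi)$, use Lemmas \ref{ref:lemma on line bundles and translations} and \ref{lem line bundle and pullback via homom.} together with $(-1)^*H=H$ and $(-1)^*\chi=1/\chi$ to compute the Hermitian forms and semicharacters, and conclude by the same cancellation $\sum_i H(nx,a_i)+H(-nx,\sum_i a_i)=0$.
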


In the case where $L$ is effective, we aim to show that the line bundle $M$ on $T^n$ is generated by global sections which are symmetric with respect to the $\mathfrak{S}_{n+1}$-action on $T^n$.
\begin{lem}
\label{lem s_x are symmetric}
Let $L$ be an effective line bundle on an abelian surface $T$, and let $s\in H^0(L)$ be a non-zero section. For $x\in T$, consider the section $s_x$ of $M_x$ defined as follows:
\[s_x\coloneqq  t^*_{(nx,\ldots,nx)}\left(\displaystyle\bigotimes_{i=1}^n\mathrm{pr}_i^*s\right)\otimes  t^*_{(-x,\ldots,-x)}(-1)^*\mu^*s.\]
Then, $s_x$ is symmetric with respect to the $\mathfrak{S}_{n+1}$-action on $T^n$.
\begin{proof}
Let us compute $s_x(a_1,\ldots,a_n)$ for $(a_1,\ldots, a_n)\in T^n$:
\begin{align*}
    s_x(a_1,\ldots,a_n)  &=  \left(t^*_{(nx,\ldots, nx)}\left(\displaystyle\bigotimes_{i=1}^n\mathrm{pr}_i^*s\right)\otimes  t^*_{(-x,\ldots,-x)}(-1)^*\mu^*s\right)(a_1,\ldots,a_n)\\
    & = \left(\displaystyle\bigotimes_{i=1}^n\mathrm{pr}_i^*s\right)(a_1+nx,\ldots,a_n+nx)\cdot (-1)^*\mu^*s (a_1-x,\ldots, a_n-x)\\
    & = s(a_1+nx)\cdot \ldots\cdot s(a_n+nx)\cdot s(-a_1-\ldots-a_n+nx).
\end{align*}
Applying Remark \ref{rmk on S_3 action}, we deduce from this expression of $s_x(a_1,\ldots,a_n)$ that $s_x$ is symmetric with respect to the $\mathfrak{S}_{n+1}$-action on $T^n$.
\end{proof}
\end{lem}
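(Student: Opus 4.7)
The plan is to compute $s_x(a_1,\ldots,a_n)$ explicitly as a product of values of $s$, reinterpret it as a symmetric function of $n+1$ points summing to zero, and then check invariance under the two families of generators of the $\mathfrak{S}_{n+1}$-action described in Remark \ref{rmk on S_3 action}.

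First I would unwind the definitions. For a section $r$ of a line bundle on $T$ and $v\in T$, one has $(t_v^* r)(y)=r(y+v)$, and similarly $(-1)^*\mu^* s$ evaluated at $(a_1,\ldots,a_n)$ equals $s(-(a_1+\cdots+a_n))$. Substituting these into the definition of $s_x$ yields
\[
s_x(a_1,\ldots,a_n)=\prod_{i=1}^{n}s(a_i+nx)\cdot s\!\left(-\sum_{i=1}^{n}a_i+nx\right).
\]
Setting $a_{n+1}:=-\sum_{i=1}^n a_i$, so that $(a_1,\ldots,a_{n+1})\in K_T^n$ under the identification $\varphi$ of Lemma \ref{lem on the diagram}, this is simply $\prod_{i=1}^{n+1}s(a_i+nx)$, a symmetric expression in the coordinates $a_1,\ldots,a_{n+1}$ on $K_T^n$.

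With this reformulation, symmetry becomes transparent. For the transposition generators $(a_1,\ldots,a_n)\mapsto(a_i,a_2,\ldots,a_{i-1},a_1,a_{i+1},\ldots,a_n)$, the first $n$ factors of $s_x$ get permuted among themselves and the last factor is unchanged since it depends only on $\sum_{j=1}^n a_j$. For the remaining generator $(a_1,\ldots,a_n)\mapsto(-\sum_i a_i,a_2,\ldots,a_n)$, observe that under this substitution the first factor $s(a_1+nx)$ is replaced by $s(-\sum_i a_i+nx)$, which is exactly the old last factor, while the new last factor becomes $s(-(-\sum_i a_i)-a_2-\cdots-a_n+nx)=s(a_1+nx)$. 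Hence $s_x$ is preserved.

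There is no real obstacle here: once the section is written down pointwise and the variable $a_{n+1}$ is introduced, the verification reduces to tracking which generator moves which factor of the product. The only care needed is to confirm that the translations by $nx$ in the first tensor factor and by $-x$ in the second tensor factor are precisely what makes all $n+1$ factors take the uniform shape $s(\,\cdot\, +nx)$; this is exactly the reason the shifts were chosen in Proposition \ref{Prop M_x sim M}.
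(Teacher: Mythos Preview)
Your proof is correct and follows essentially the same approach as the paper: both compute $s_x(a_1,\ldots,a_n)=\prod_{i=1}^n s(a_i+nx)\cdot s(-\sum a_i+nx)$ and then invoke the description of the $\mathfrak{S}_{n+1}$-action from Remark \ref{rmk on S_3 action}. Your introduction of $a_{n+1}=-\sum a_i$ to rewrite the section as the manifestly symmetric product $\prod_{i=1}^{n+1} s(a_i+nx)$, together with the explicit check on generators, makes the final step more transparent than the paper's brief ``we deduce from this expression.''
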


\begin{prop}
\label{prop the induced line bundle on T^2 is bpf}
Let $L$ be an effective line bundle on an abelian surface $T$. Then, the induced line bundle $M$ on $T^n\simeq K^{n}_T$ is globally generated by sections which are symmetric with respect to the $\mathfrak{S}_{n+1}$-action inherited from $T^{n+1}$.
\begin{proof}
Let $s\in H^0(L)$ and let $D_T$ be the divisor on $T$ cut out by $s$. For every $x\in T$, let $D_x$ be the divisor on $T^n\simeq K^{n}_T$ cut out by the section $s_x$ of the line bundle \[M_x\coloneqq t_{(nx,\ldots,nx)}^*(L^{\boxtimes n})\otimes t_{(-x,\ldots,-x)}^*(-1)^*\mu^*L.\] In Proposition \ref{Prop M_x sim M}, we proved that the line bundle $M_x$ is isomorphic to $M$. Therefore, to prove base-point-freeness of $M$, it suffices to show that for every $(a_1,\ldots,a_n)\in T^n$ there exists an element $x\in T$ such that $(a_1,\ldots,a_n)\not \in D_x$.
Note that the following equivalences hold:
\begin{align*}
(a_1,\ldots,a_n)\in t_{(nx,\ldots,nx)}^*(D^{\boxtimes n}) \iff nx\in t^*_{a_i} D \text{ for some } i,
\\
(a_1,\ldots,a_n)\in t_{(-x,\ldots,-x)}^*(-1)^*\mu^*D \iff nx\in t^*_{-a_1-\ldots-a_n}D.
\end{align*}
Therefore, $(a_1,\ldots,a_n)\not \in D_x$ for any $x\in T$ such that $nx\not \in \textstyle\sum_i t^*_{a_i} D+t_{-a_1\ldots-a_n}^*D$. As the sections $s_x$ are symmetric with respect to the $\mathfrak{S}_{n+1}$-action on $T^n$ by Lemma \ref{lem s_x are symmetric}, we conclude that $M$ is generated by symmetric global sections.
\end{proof}
\end{prop}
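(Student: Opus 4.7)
The plan is, for each fixed point $(a_1,\ldots,a_n) \in T^n$, to produce an $\mathfrak{S}_{n+1}$-symmetric global section of $M$ not vanishing there. I would use the family of symmetric sections $s_x \in H^0(M_x)$ from Lemma \ref{lem s_x are symmetric}, transported to $M$ via the isomorphisms $M_x \simeq M$ of Proposition \ref{Prop M_x sim M}; then every section in play is automatically $\mathfrak{S}_{n+1}$-symmetric, and only base-point-freeness has to be verified.

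First I would pick a non-zero section $s \in H^0(L)$, which exists since $L$ is effective, and let $D_T \subset T$ be its zero divisor. Unravelling the formula for $s_x$ from Lemma \ref{lem s_x are symmetric}, the zero divisor $D_x \subset T^n$ of $s_x$ decomposes as the union of the $n$ pulled-back translates $\mathrm{pr}_i^{-1}\bigl(t^*_{-nx}D_T\bigr)$ together with the translated preimage under the minus-sum map. Consequently, the non-vanishing condition $s_x(a_1,\ldots,a_n)\neq 0$ becomes the avoidance condition
\[
nx \notin \bigcup_{i=1}^n t^*_{a_i}D_T \;\cup\; t^*_{-a_1-\cdots-a_n}D_T
\]
in the surface $T$.

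The last step is essentially a dimension count: the right-hand side is a finite union of divisors on the two-dimensional torus $T$, hence a proper closed subvariety, and its preimage under the surjective isogeny $[n]\colon T \to T$ is again a proper closed subvariety. Any $x \in T$ outside this preimage therefore yields, via the isomorphism $M_x \simeq M$ of Proposition \ref{Prop M_x sim M}, a symmetric global section of $M$ not vanishing at $(a_1,\ldots,a_n)$; since the point was arbitrary, this proves the proposition. The substantive work has already been carried out in Proposition \ref{Prop M_x sim M} and Lemma \ref{lem s_x are symmetric}, so I anticipate no serious obstacle beyond this avoidance argument; the only mild subtlety is checking that transporting $s_x$ through the chosen isomorphism preserves the $\mathfrak{S}_{n+1}$-symmetry, which is immediate from the naturality of the construction.
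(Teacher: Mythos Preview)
Your proposal is correct and follows essentially the same route as the paper's proof: both transport the symmetric sections $s_x$ through the isomorphisms $M_x \simeq M$ of Proposition~\ref{Prop M_x sim M}, rewrite the non-vanishing condition $s_x(a_1,\ldots,a_n)\neq 0$ as the avoidance condition $nx \notin \bigcup_i t^*_{a_i}D_T \cup t^*_{-a_1-\cdots-a_n}D_T$, and conclude by noting that this locus is a proper closed subset of $T$. Your version is slightly more explicit about the dimension count and the surjectivity of $[n]$, and you flag the (easy) point that symmetry survives the transport along $M_x \simeq M$, which the paper leaves implicit.
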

We are now able to prove that the induced line bundle on the generalized Kummer variety is base-point-free:
\begin{thm}
\label{thm: the line bundle on Kum is bpf}
Let $n>1$, and let $L$ be an effective line bundle on an abelian surface $T$. Then, the induced line bundle $L_n$ on $\mathrm{Kum}^n(T)$ is base-point-free.
\begin{proof}
Since the line bundle $L^{\boxtimes (n+1)}$ on $T^{n+1}$ is symmetric with respect to the natural $\mathfrak{S}_{n+1}$-action, it descends to a line bundle $L_{(n+1)}$ on the symmetric product $T^{(n+1)}$. By Proposition \ref{prop the induced line bundle on T^2 is bpf}, there are symmetric global sections of the line bundle $L^{\boxtimes (n+1)}|_{K^{n}_T}\simeq M$ which generate it. These sections descend to sections of $L_{(n+1)}|_{\overline{K^{n}}}$ and they generate $L_{(n+1)}|_{\overline{K^{n}}}$. Therefore, the line bundle $L_{(n+1)}|_{\overline{K^{n}}}$ is base-point-free. Finally, since the line bundle $L_n$ on $\mathrm{Kum}^n(T)$ induced by the line bundle $L$ coincides with the pullback of the line bundle $L_{(n+1)}|_{\overline{K^{n}}}$ via the map $\mathrm{Kum}^n(T)\rightarrow \overline{K^{n}}$, we conclude that $L_{n}$ is base-point-free.
\end{proof}
\end{thm}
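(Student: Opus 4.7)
The plan is to descend the conclusion of Proposition \ref{prop the induced line bundle on T^2 is bpf} through the two stages of the diagram (\ref{scheme}): first from $K^n_T$ to $\overline{K^n}$ via the symmetric group quotient, and then pull back from $\overline{K^n}$ to $\mathrm{Kum}^n(T)$ along the restricted Hilbert--Chow morphism.

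First, the line bundle $L^{\boxtimes(n+1)}$ on $T^{n+1}$ is manifestly $\mathfrak{S}_{n+1}$-invariant, so it descends to a line bundle $L_{(n+1)}$ on the symmetric product $T^{(n+1)}$; its restriction to the closed subvariety $\overline{K^n}$ gives a line bundle whose pullback along the $\mathfrak{S}_{n+1}$-quotient $K^n_T \to \overline{K^n}$ recovers $L^{\boxtimes(n+1)}|_{K^n_T}$. Via the isomorphism $\varphi$ of Lemma \ref{lem on the diagram}, this restriction is identified with $M$, and the transported action on $T^n$ is the one described in Remark \ref{rmk on S_3 action}. By Proposition \ref{prop the induced line bundle on T^2 is bpf}, $M$ is globally generated by sections which are symmetric with respect to that action. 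Such symmetric sections descend to global sections of $L_{(n+1)}|_{\overline{K^n}}$, and since their pullbacks generate $M$ at every point of $K^n_T$ and the quotient map is surjective, the descended sections generate $L_{(n+1)}|_{\overline{K^n}}$ at every point of $\overline{K^n}$. Hence $L_{(n+1)}|_{\overline{K^n}}$ is base-point-free.

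Second, by definition (see Remark \ref{rmk: delta on hilb}), the line bundle $L_n$ on $\mathrm{Kum}^n(T)$ is the restriction to $\mathrm{Kum}^n(T) \subset T^{[n+1]}$ of the line bundle on $T^{[n+1]}$ obtained by pulling back $L_{(n+1)}$ along the Hilbert--Chow morphism $T^{[n+1]} \to T^{(n+1)}$. Equivalently, $L_n$ is the pullback of $L_{(n+1)}|_{\overline{K^n}}$ along the induced map $\mathrm{Kum}^n(T) \to \overline{K^n}$ appearing in the leftmost column of (\ref{scheme}). Since base-point-freeness is preserved under pullback, we conclude that $L_n$ is base-point-free.

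There is no real obstacle beyond Proposition \ref{prop the induced line bundle on T^2 is bpf}, which has already been established; the present theorem is a descent argument. The one point that needs care is bookkeeping around the identification $\varphi$: namely that $\mathfrak{S}_{n+1}$-invariance in the sense of Remark \ref{rmk on S_3 action} on $T^n$ corresponds exactly to $\mathfrak{S}_{n+1}$-invariance of the honest restrictions on $K^n_T$, so that the symmetric sections really do descend to $\overline{K^n}$. Once this compatibility is recorded, the two descent steps above finish the proof.
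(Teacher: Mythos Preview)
Your proof is correct and follows essentially the same route as the paper's: descend the symmetric generating sections of $M\simeq L^{\boxtimes(n+1)}|_{K^n_T}$ through the $\mathfrak{S}_{n+1}$-quotient to obtain base-point-freeness of $L_{(n+1)}|_{\overline{K^n}}$, and then pull back along the restricted Hilbert--Chow morphism $\mathrm{Kum}^n(T)\to\overline{K^n}$. Your version is slightly more explicit about the surjectivity of the quotient map and the compatibility of the $\mathfrak{S}_{n+1}$-action under $\varphi$, but these are exactly the points the paper's proof is tacitly using.
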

We can now conclude the proof of Theorem \ref{thm bpf for divisibility 1}:
\begin{proof}[Proof of Theorem \ref{thm bpf for divisibility 1}] For any $n>1$ and $d>0$, the moduli space $\M_{d,1}^n$ is non-empty by Lemma \ref{M_{d,1} is non-empty}. Let $(T,L)$ be an abelian surface such that $q(L)=2d$. Since $L$ is a polarization on $T$, the vector space $H^0(L)$ is non-empty. We can then apply Theorem \ref{thm: the line bundle on Kum is bpf} to conclude that the line bundle $L_n$ on $\mathrm{Kum}^n(T)$ is base-point-free. This, together with Proposition \ref{prop sufficient condition to prove generic bpf}, proves generic base-point-freeness in the moduli space $\M_{d,1}^n$.
\end{proof}

\section{Tautological bundles}
\label{Sec: tautological bundles}
In this section, we recall the definition of tautological bundles on the Hilbert scheme of points and the notion of $k$-very ampleness for a line bundle on an abelian surface. These will be used in Corollary \ref{if n-very ample then bpf on Kum} to deduce a criterion for base-point-freeness of the line bundles of the form $kL_n-\delta$ on $\mathrm{Kum}^n(T)$.
\smallskip

Let $T$ be an abelian surface and consider the universal family for $T^{[n+1]}$:
\[\Xi^{n+1}\coloneqq \{(P,\xi)\in T\times T^{[n+1]}\;|\; P\in\xi\}.\]
Denote by $p_T\colon \Xi^{n+1}\rightarrow T$ and by $p_{T^{[n+1]}}\colon \Xi^{n+1}\rightarrow T^{[n+1]}$ the two projections.
By construction, the fibre of $p_{T^{[n+1]}}$ over $\xi\in T^{[n+1]}$ is the subscheme $\xi\subseteq T$.
\begin{defn}
Let $L$ be a line bundle on an abelian surface $T$. The \textit{tautological bundle} associated to $L$ on $T^{[n+1]}$ is
\[L^{[n+1]}\coloneqq (p_{T^{[n+1]}})_*p_T^*(L).\]
\end{defn}
Since $p_{T^{[n+1]}}$ is a finite and flat morphism, $L^{[n+1]}$ is a vector bundle of rank $n+1$. Denoting by $\det L^{[n+1]}\coloneqq \bigwedge ^{n+1}L^{[n+1]}$ its determinant, the following equality holds:
\begin{equation}
\label{equation determinant}
 \det L^{[n+1]}=L_{n}-\delta,
\end{equation}
where, as in Remark \ref{rmk: delta on hilb}, $L_{n}$ is the line bundle that $L$ induces on $T^{[n+1]}$ and $\delta$ is the line bundle $\frac{1}{2}E$, see \cite[Lem.\ 3.7 \& Thm.\ 4.6]{L}.
The next lemma describes the global sections of the line bundles that we have introduced. For all the details, see \cite{SC}.

\begin{lem}
\label{lem global sections of these bundles}
With the previous notation, we have the following canonical isomorphisms:
\begin{itemize}
 \item[(\textit{i})] $H^0(T^{[n+1]}, L_{n})\simeq \mathrm{Sym}^{n+1} H^0(T, L)$;
 \item[(\textit{ii})] $H^0(T^{[n+1]}, L^{[n+1]})\simeq H^0(T, L);$
 \item[(\textit{iii})] $H^0(T^{[n+1]}, L_{n}-\delta)\simeq\bigwedge^{n+1} H^0(T,L).$
\end{itemize}
\end{lem}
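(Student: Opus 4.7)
The three isomorphisms have closely related but distinct proofs. My plan is to use the Hilbert--Chow morphism $\pi\colon T^{[n+1]}\to T^{(n+1)}$ and the quotient $q\colon T^{n+1}\to T^{(n+1)}$ for (i) and (iii), and the projection $p_{T}\colon \Xi^{n+1}\to T$ for (ii). The hardest part will be (iii), where one has to track how the half-class $\delta=\frac{1}{2}E$ interacts with pullback along a cover of $T^{[n+1]}$.

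For (i), the line bundle $L_{n}$ is by construction the pullback via $\pi$ of a line bundle $L_{(n+1)}$ on the symmetric product which descends from $L^{\boxtimes(n+1)}$ via $q$. Since $T^{(n+1)}$ has only quotient (hence rational) singularities, the crepant resolution $\pi$ satisfies $\pi_{*}\0_{T^{[n+1]}}=\0_{T^{(n+1)}}$, so the projection formula gives $H^{0}(T^{[n+1]},L_{n})\simeq H^{0}(T^{(n+1)},L_{(n+1)})$, which is the $\mathfrak{S}_{n+1}$-invariant part of $H^{0}(T^{n+1},L^{\boxtimes(n+1)})$; K\"unneth then identifies this with $\mathrm{Sym}^{n+1}H^{0}(T,L)$. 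For (ii), unwinding the definition of $L^{[n+1]}$ gives
\[
H^{0}(T^{[n+1]},L^{[n+1]})=H^{0}(\Xi^{n+1},p_{T}^{*}L),
\]
so it is enough to check $(p_{T})_{*}\0_{\Xi^{n+1}}=\0_{T}$. This follows from the facts that $p_{T}$ is a proper surjection, that its fibre over $P\in T$ is the irreducible subvariety of $T^{[n+1]}$ parametrizing length-$(n+1)$ subschemes containing $P$, and that $T$ is smooth.

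For (iii), I would compose the isomorphism from (ii) with the natural determinant map $\bigwedge^{n+1}H^{0}(\mathcal{E})\to H^{0}(\det \mathcal{E})$ for the rank-$(n+1)$ vector bundle $\mathcal{E}=L^{[n+1]}$, invoking (\ref{equation determinant}) to identify $\det L^{[n+1]}=L_{n}-\delta$. Concretely, a wedge $s_{1}\wedge\ldots\wedge s_{n+1}$ is sent to the section whose value at a reduced subscheme $\xi=\{P_{1},\ldots,P_{n+1}\}$ is $\det(s_{i}(P_{j}))$, an expression that vanishes on the big diagonal and so accounts for the $-\delta$ twist. To prove this map is an isomorphism, I would pull back along the isospectral variety $\tilde{\Xi}\coloneqq T^{n+1}\times_{T^{(n+1)}}T^{[n+1]}$: the pullback of $L_{n}-\delta$ there agrees with the pullback of $L^{\boxtimes(n+1)}$ from $T^{n+1}$, but with the $\mathfrak{S}_{n+1}$-action twisted by the sign character, so that sections of $L_{n}-\delta$ correspond to sign-isotypic sections of $L^{\boxtimes(n+1)}$, which by K\"unneth are exactly $\bigwedge^{n+1}H^{0}(T,L)$. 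The delicate step is precisely this sign twist: verifying that the ``half-divisor'' $\delta$ pulls back to the ramification divisor producing the sign character and not the trivial one is essentially the content of the isospectral description of $T^{[n+1]}$, and is the source of all the technical work in \cite{SC}.
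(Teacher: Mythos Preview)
The paper does not actually prove this lemma: it states the result and defers entirely to \cite{SC} with the sentence ``For all the details, see \cite{SC}.'' So there is no in-paper argument to compare against; what one can do is compare your sketch to the standard arguments that \cite{SC} records.

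Your outline is essentially correct and follows the standard route. Part (i) is exactly the usual argument via $\pi_{*}\0_{T^{[n+1]}}=\0_{T^{(n+1)}}$ and K\"unneth. Part (ii) is fine once one knows that the universal family $\Xi^{n+1}$ is an integral variety (it is Cohen--Macaulay, being finite flat over the smooth $T^{[n+1]}$, generically reduced, and irreducible by a monodromy argument over the locus of reduced subschemes); only then does ``proper surjection with irreducible fibres onto a smooth base'' force $(p_{T})_{*}\0_{\Xi^{n+1}}=\0_{T}$. You might want to say this explicitly rather than leave it implicit. For (iii), your description via the isospectral Hilbert scheme and the sign-twist is precisely the mechanism used in \cite{SC} (ultimately going back to Haiman's work), and your identification of the delicate point---that pulling back $\delta$ produces the sign character---is accurate. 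In short, your proposal is a faithful sketch of the proof the paper is citing, with no substantive gaps beyond the integrality remark above.
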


\begin{defn}
A line bundle $L$ on an abelian surface $T$ is \textit{k-very ample} for some non-negative integer $k$ if, for every $\xi\in T^{[k+1]}$, the evaluation map
\[\mathrm{ev}_{L,\xi}\colon H^0(T,L)\twoheadrightarrow H^0(T, L\otimes \0_{\xi})\]
is surjective. 
\end{defn}
This notion is relevant for us due to the following result:
\begin{prop}
\label{prop: n-very ample iff globally gen}
Let $T$ be an abelian surface and let $L$ be a line bundle on $T$. With the previous notation, the following are equivalent:
\begin{itemize}
 \item[(\textit{i})] The line bundle $L_{n}-\delta$ on $T^{[n+1]}$ is globally generated.
 \item[(\textit{ii})] The tautological bundle $L^{[n+1]}$ on $T^{[n+1]}$ is globally generated.
 \item[(\textit{iii})] The line bundle $L$ is $n$-very ample.
\end{itemize}
\begin{proof}
Since $H^0(T^{[n+1]}, L^{[n+1]})\simeq H^0(T, L)$ by Lemma $\ref{lem global sections of these bundles}$, the equivalence of $(ii)$ and $(iii)$ is an immediate consequence of the fact that $p_{T^{[n+1]}}^*(\xi)\cong\xi$.
\smallskip

Let us prove the equivalence of $(i)$ and $(ii)$. It is straightforward to see that the vector bundle $L^{[n+1]}$ is globally generated if and only if the map
\[\textstyle\left(\bigwedge^{n+1}H^0(T^{[n+1]}, L^{[n+1]})\right)\otimes \0_{T^{[n+1]}}\rightarrow \bigwedge^{n+1}L^{[n+1]}\]
is surjective. Applying Lemma \ref{lem global sections of these bundles} and using $(\ref{equation determinant}$), this map is the same as the map 
\[H^0(T^{[n+1]},L_{n}-\delta)\otimes \0_{T^{[n+1]}}\rightarrow L_{n}-\delta.\]
Finally, this last map is surjective if and only if the line bundle  $L_{n}-\delta$ is globally generated.
\end{proof}
\end{prop}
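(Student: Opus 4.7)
The plan is to establish the chain (ii) $\Leftrightarrow$ (iii) and then (i) $\Leftrightarrow$ (ii), in each case transporting global generation through the three identifications recorded in Lemma \ref{lem global sections of these bundles}.

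For the equivalence (ii) $\Leftrightarrow$ (iii), the key is that the projection $p_{T^{[n+1]}}\colon \Xi^{n+1}\to T^{[n+1]}$ is finite and flat of degree $n+1$, with schematic fibre over $\xi\in T^{[n+1]}$ equal to $\xi\subset T$ by construction. Consequently the fibre of $L^{[n+1]}=(p_{T^{[n+1]}})_* p_T^*L$ at $\xi$ is canonically $H^0(\xi,L\otimes\0_\xi)$, and under the identification $H^0(T^{[n+1]},L^{[n+1]})\simeq H^0(T,L)$ from Lemma \ref{lem global sections of these bundles}(ii), the fibrewise evaluation of global sections of $L^{[n+1]}$ at $\xi$ is exactly the restriction map $H^0(T,L)\to H^0(\xi,L\otimes\0_\xi)$. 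Surjectivity of this map at every $\xi\in T^{[n+1]}$ is the definition of $n$-very ampleness of $L$, and is also exactly what it means for $L^{[n+1]}$ to be globally generated.

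For (i) $\Leftrightarrow$ (ii), I would rely on the elementary observation that a rank $r$ vector bundle $E$ on a variety $X$ is globally generated if and only if the natural wedge map $\bigwedge^{r} H^0(X,E)\otimes\0_X\to \det E$ is surjective. If $E$ is globally generated, sections $s_1,\dots,s_r$ spanning $E_x$ satisfy $(s_1\wedge\dots\wedge s_r)(x)\neq 0$; conversely, if some wedge $s_1\wedge\dots\wedge s_r$ is nonzero at $x$, then the $s_i(x)$ are linearly independent, hence span $E_x$. Applying this to $E=L^{[n+1]}$, and using the identification $\det L^{[n+1]}=L_n-\delta$ from (\ref{equation determinant}) together with $\bigwedge^{n+1}H^0(L^{[n+1]})\simeq H^0(L_n-\delta)$ from Lemma \ref{lem global sections of these bundles}(iii), the wedge map becomes the evaluation map $H^0(L_n-\delta)\otimes\0_{T^{[n+1]}}\to L_n-\delta$, and (i) $\Leftrightarrow$ (ii) follows.

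I do not expect a genuine obstacle: the whole proposition is a bookkeeping exercise combining the three canonical identifications of Lemma \ref{lem global sections of these bundles} with the fibre description of $\Xi^{n+1}\to T^{[n+1]}$. The only step that is not a pure reformulation is the "wedge criterion" for generation of a rank $r$ bundle in (i) $\Leftrightarrow$ (ii), which is standard linear algebra once stated.
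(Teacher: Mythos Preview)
Your proposal is correct and follows exactly the same route as the paper: the equivalence (ii) $\Leftrightarrow$ (iii) via the fibre description of $L^{[n+1]}$ and Lemma \ref{lem global sections of these bundles}(ii), and the equivalence (i) $\Leftrightarrow$ (ii) via the wedge/determinant criterion combined with (\ref{equation determinant}) and Lemma \ref{lem global sections of these bundles}(iii). Your write-up is in fact slightly more detailed than the paper's (you spell out the linear-algebra justification of the wedge criterion), but there is no substantive difference.
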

As an immediate consequence of Proposition \ref{prop: n-very ample iff globally gen}, we have the following corollary:
\begin{cor}
\label{if n-very ample then bpf on Kum}
Let $T$ be an abelian surface and let $L$ be a line bundle on $T$. If the line bundle $L$ is $n$-very ample, then the line bundle $L_{n}-\delta$ on $\mathrm{Kum}^n(T)$ is globally generated.
\end{cor}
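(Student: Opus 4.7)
The plan is to derive the statement as an immediate consequence of Proposition \ref{prop: n-very ample iff globally gen} combined with the notational convention of Remark \ref{rmk: delta on hilb}. First, I would invoke the equivalence of items (\textit{i}) and (\textit{iii}) in Proposition \ref{prop: n-very ample iff globally gen}: under the assumption that $L$ is $n$-very ample, the line bundle $L_{n}-\delta$ on the Hilbert scheme $T^{[n+1]}$ is globally generated.

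Next, I would appeal to Remark \ref{rmk: delta on hilb}, according to which the line bundle denoted $L_{n}-\delta$ on $\mathrm{Kum}^n(T)$ is, by convention, the restriction of the line bundle $L_{n}-\delta$ from $T^{[n+1]}$ along the closed embedding $\iota\colon \mathrm{Kum}^n(T)\hookrightarrow T^{[n+1]}$. Global generation is preserved under arbitrary pullback: if sections $s_1,\ldots,s_m \in H^0(T^{[n+1]},L_{n}-\delta)$ generate every stalk on $T^{[n+1]}$, then the restrictions $\iota^*s_1,\ldots,\iota^*s_m \in H^0(\mathrm{Kum}^n(T),L_{n}-\delta)$ generate every stalk on $\mathrm{Kum}^n(T)$.

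There is no real obstacle: all of the substantive work has already been carried out in Proposition \ref{prop: n-very ample iff globally gen}, and the corollary merely records the elementary observation that base-point-freeness descends from the ambient Hilbert scheme to the closed subvariety $\mathrm{Kum}^n(T)$.
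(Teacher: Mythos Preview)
Your proposal is correct and matches the paper's approach: the paper states the corollary as an immediate consequence of Proposition~\ref{prop: n-very ample iff globally gen} without further argument, and you have spelled out precisely that consequence---namely, that global generation of $L_n-\delta$ on $T^{[n+1]}$ (given by the equivalence of (\textit{i}) and (\textit{iii})) restricts to global generation on the closed subvariety $\mathrm{Kum}^n(T)$.
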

Let us end this section by recalling a criterion for k-very ampleness. Let $T$ be an abelian surface and denote by $\mathrm{Amp}(T)$ its ample cone. Let
\[f\colon\mathrm{Amp}(T)\rightarrow\mathbb{Z}_{\geq-1}\]
be the map that associates to an ample line bundle $L$ on $T$ the greatest integer $k$ such that $L$ is $k$-very ample; we set by definition $f(L)=-1$ if $L$ is not generated by global sections. Then, the following result holds.
\begin{prop}\cite[Thm.\ 4.3]{AM}
\label{proposition function about k very ampleness}
Let $(T, L)$ be an abelian surface such that $\mathrm{NS}(T)=\langle c_1(L)\rangle$ and let $q(L)=2d$. Then, for all $m\geq 2$,
\[f(mL)=2(m-1)d-2.\]
\end{prop}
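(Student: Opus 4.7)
The plan is to split the equality $f(mL) = 2(m-1)d - 2$ into the two inequalities $f(mL) \le 2(m-1)d - 2$ and $f(mL) \ge 2(m-1)d - 2$, which I will treat by different techniques.

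For the upper bound I will exhibit one explicit length-$2(m-1)d$ subscheme $\xi$ on which evaluation from $H^0(mL)$ fails to be surjective. Pick a smooth curve $C \in |L|$, whose genus is $g = d+1$ by adjunction. Since $(m-1)L|_C$ has positive Euler characteristic, it admits a non-zero section whose divisor of zeros $\xi$ satisfies $\0_C(\xi) \simeq (m-1)L|_C$ and has length $2(m-1)d$. Adjunction gives $mL|_C(-\xi) \simeq L|_C \simeq K_C$, so Serre duality yields $h^1(C, mL|_C(-\xi)) = 1$. Hence the restriction $H^0(C, mL|_C) \to H^0(\xi, mL|_\xi)$ has a one-dimensional cokernel. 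Combined with the surjection $H^0(T, mL) \twoheadrightarrow H^0(C, mL|_C)$ coming from $H^1(T, (m-1)L) = 0$ by Kodaira vanishing, this shows $mL$ is not $(2(m-1)d - 1)$-very ample.

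For the lower bound I will invoke the Reider-type theorem of Beltrametti--Francia--Sommese for $k$-very ampleness: if $D$ is a nef divisor on a smooth projective surface $X$ with $D^2 \ge 4k + 5$, and $K_X + D$ is not $k$-very ample, then there exists an effective divisor $E$ with $D \cdot E - (k+1) \le E^2 < D \cdot E /2 \le k+1$. Applied to $X = T$, $D = mL$, and $k = 2(m-1)d - 2$, the hypothesis $2m^2 d \ge 8(m-1)d - 3$ reduces to $2d(m-2)^2 \ge -3$, which is automatic. The Picard rank one assumption $\mathrm{NS}(T) = \mathbb{Z}L$ forces any candidate $E$ to be of the form $eL$ with $e \in \mathbb{Z}_{>0}$; substituting $mL \cdot E = 2mde$ and $E^2 = 2de^2$ collapses the Reider inequalities to $e < m/2$ together with $e(m-e) \le m - 2$. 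A short case check ($e = 1$ gives the impossible $m - 1 \le m - 2$; $e \ge 2$ combined with $e < m/2$ only arises for $m \ge 5$ and then forces $e(m-e) \ge 2(m-2) > m - 2$) rules out any positive integer solution, and hence $mL$ is $k$-very ample.

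The main obstacle I expect is on the lower-bound side: one has to locate the correct form of the Beltrametti--Francia--Sommese theorem tailored to $k$-very ampleness (rather than to base-point-freeness or very ampleness), and carefully match its numerical conclusions against the Picard rank one hypothesis. Once the arithmetic reduction to an inequality in $e$ and $m$ is in place, the verification is elementary. The upper bound, by contrast, is essentially a direct Serre duality computation on $C$, once one notices that $(m-1)L|_C$ is precisely the twist needed to pull $mL|_C$ back onto $K_C$.
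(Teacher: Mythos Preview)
The paper does not prove this proposition at all: it is quoted verbatim from \cite[Thm.\ 4.3]{AM} and used as a black box. So there is no in-paper argument to compare against; what you have written is an independent proof of the cited result.

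Your argument is sound and is in fact the standard route to such statements (and essentially the one in \cite{AM}): a Reider/Beltrametti--Francia--Sommese criterion for the lower bound, reduced via the Picard rank one hypothesis to an elementary Diophantine check in $e$ and $m$, and an explicit failing subscheme on a curve $C\in|L|$ for the upper bound. The numerics are correct; in particular the reduction $2de(m-e)\le 2(m-1)d-1$ does force $e(m-e)\le m-2$ because the left side is even, and your case split $e=1$ versus $2\le e<m/2$ disposes of all possibilities.

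One small point worth tightening: you assume the existence of a \emph{smooth} $C\in|L|$. Under the hypothesis $\mathrm{NS}(T)=\mathbb{Z}c_1(L)$ every member of $|L|$ is automatically irreducible, but smoothness is not completely free (for instance $|L|$ can have base points when the type is $(1,2)$). Your argument, however, does not actually need smoothness: for any integral $C\in|L|$ one still has $\omega_C\simeq L|_C$ by adjunction, Serre duality gives $h^1(C,mL|_C(-\xi))=h^0(C,\mathscr{O}_C)=1$, and a nonzero section of $(m-1)L|_C$ on an integral curve cuts out a Cartier divisor of the correct length $2(m-1)d$. Replacing ``smooth'' by ``integral'' removes the only gap.
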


\section{Other divisibilities}
\label{last section}
We have now all the tools needed to study generic base-point-freeness in the moduli space $\M^n_{d,t}$  for $n\in\{2,3,4\}$ when the divisibility is greater than one.
\smallskip

We begin by finding, for all possible values of $t>1$ and $d$ such that $\M^n_{d,t}$ is non-empty, a generalized Kummer variety together with a line bundle of divisibility $t$ and square $2d$. In the case $n=2$, the following holds:
\begin{prop}
\label{prop: existence an ab sur st the gen kum var belongs to M^2_d,t}
Let $t>1$ and $d>0$ be integers such that $\M^2_{d,t}$ is non-empty. Then, there exists an abelian surface $(T,L)$ such that the line bundle $tL_2-\delta$ on the hyperkähler manifold $\mathrm{Kum}^2(T)$ satisfies $q(tL_2-\delta)=2d$ and $\mathrm{div}(tL_2-\delta)=t$.

\begin{proof}
Since $n=2$, we have to consider only $t\in\{2,3,6\}$ by Remark \ref{rmk: divisibility divides GCD}. Given any primitive line bundle $L$ on an abelian surface $T$, the line bundle $tL_2-\delta$ on $\mathrm{Kum}^2(T)$ satisfies the condition $\mathrm{div}(tL_2-\delta)=t$. Therefore, for each given pair of integers $(d,t)$ such that $\M^2_{d,t}$ is non-empty, we just need to find an abelian surface such that $q(tL_2-\delta)=2d$.
\smallskip

Let $t=2$, and let $(X,H)\in \M^2_{d,2}$. By Proposition \ref{ref: prop lattice Kum type ISV}, there exists an abelian surface $(\Tilde{T},\Tilde{L})$ such that $H^2(X,\mathbb{Z})\cong H^2(\mathrm{Kum}^2(\Tilde{T}),\mathbb{Z})$ and, via this isomorphism,
\[H=a\Tilde{L}_2+b\delta,\]
for some integers $a$ and $b$. Note that $a$ and $b$ are coprime since $H$ is primitive.
Therefore, we deduce from $\mathrm{div}(H)=2$ that $a$ is even and $b$ is odd.
Write $a=2k$ for some integer $k$, and let $q(\Tilde{L})=2\Tilde{d}$. Define the following number:
 \begin{equation}
 \label{equation of d tilde for t=2}
 \hat{d}\coloneqq k^2\Tilde{d}-\frac{3(b^2-1)}{4}.
 \end{equation}
 Note that $\hat{d}$ is an integer since $b$ is odd. Let $T$ be an abelian surface which admits a polarization $L$ with $q(L)=2\hat{d}$. In particular, $q(L_2)=2\hat{d}$, and
 \[
 q(2L_2-\delta)=4(2\hat{d})-6=(2k)^2(2\Tilde{d})-6b^2=q(a\Tilde{L}+b\delta)=2d.\]
This allows us to conclude that $(\mathrm{Kum}^2(T), 2L_2-\delta)$ is a generalized Kummer variety with the prescribed invariants.
\smallskip

In the case of divisibility $t=3$ (resp., $t=6$), a similar argument works: Given an integer $d$ such that $\M^2_{d,3}$ (resp., $\M^2_{d,6}$) is non-empty, one sets $a, b,$ and $\Tilde{d}$ as in the previous case and checks that the number $\hat{d}\coloneqq k^2\Tilde{d}-\frac{(b^2-1)}{3}$ (resp., $\hat{d}\coloneqq k^2\Tilde{d}-\frac{(b^2-1)}{12}$) is an integer for any possible value of $b, k$ and $\Tilde{d}$. This allows us to conclude as in the previous case.
\end{proof}
\end{prop}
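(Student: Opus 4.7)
The strategy I would follow is to reduce the statement to an arithmetic divisibility check, since the lattice-theoretic contribution is already encoded in Remark~\ref{rmk: divisibility divides GCD} and Proposition~\ref{ref: prop lattice Kum type ISV}.

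First I would observe that for $n=2$ non-emptiness of $\M^2_{d,t}$ with $t > 1$ restricts $t$ to $\{2,3,6\}$. Next, for any primitive line bundle $L$ on an abelian surface $T$, Remark~\ref{rmk: divisibility divides GCD} immediately gives $\mathrm{div}(tL_2 - \delta) = \gcd(t, 6) = t$, so the divisibility requirement is automatic. The only thing to control is the Beauville--Bogomolov square: writing $q(L) = 2 \hat d$, the orthogonality of $L_2$ and $\delta$ together with $q(\delta) = -6$ yield $q(tL_2 - \delta) = 2 t^2 \hat d - 6$. Setting this equal to $2d$ forces $\hat d = (d+3)/t^2$. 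Once $\hat d$ is shown to be a positive integer, any abelian surface admitting a polarization of square $2\hat d$ will do, and such surfaces exist by standard theory.

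The positivity of $\hat d$ is free from $d > 0$, so the crux is integrality. To prove this I would use the non-emptiness hypothesis: pick a witness $(X,H) \in \M^2_{d,t}$ and, via Proposition~\ref{ref: prop lattice Kum type ISV}, write $H = a\tilde L_2 + b \delta$ with respect to an auxiliary abelian surface $(\tilde T, \tilde L)$ of square $2\tilde d$. Primitivity of $H$ gives $\gcd(a,b) = 1$, and $\mathrm{div}(H) = t$ together with Remark~\ref{rmk: divisibility divides GCD} forces $t \mid a$ (with a compatible coprimality statement on $b$). Writing $a = tk$, the identity $q(H) = 2d$ becomes $d = t^2 k^2 \tilde d - 3 b^2$, hence
\[
\hat d = k^2 \tilde d - \frac{3(b^2-1)}{t^2}.
\]

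I expect the main obstacle to be verifying $t^2 \mid 3(b^2-1)$ in each of the three cases $t \in \{2,3,6\}$. Fortunately each case reduces to an elementary modular arithmetic check: for $t=2$, $b$ must be odd so $b^2 \equiv 1 \pmod{8}$; for $t=3$, $\gcd(b,3)=1$ forces $b^2 \equiv 1 \pmod{3}$; for $t=6$, $\gcd(b,6)=1$ forces $b^2 \equiv 1 \pmod{24}$. With integrality in hand, any abelian surface $(T,L)$ with $q(L) = 2\hat d$ produces a line bundle $tL_2 - \delta$ on $\mathrm{Kum}^2(T)$ of square $2d$ and divisibility $t$, completing the argument.
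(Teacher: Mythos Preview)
Your proposal is correct and follows essentially the same approach as the paper: reduce to $t\in\{2,3,6\}$, observe that divisibility is automatic, pick a witness $(X,H)=a\tilde L_2+b\delta$ to extract the arithmetic constraint, and verify that $\hat d=k^2\tilde d-\frac{3(b^2-1)}{t^2}$ is a positive integer via elementary congruences. If anything, your presentation is slightly tidier, since you write a single unified formula for $\hat d$ and make the positivity check explicit, whereas the paper treats the three cases separately and leaves positivity implicit.
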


Similarly, in the case $n=3$, we have the following:
\begin{prop}
\label{prop: representatives in dimension 6}
Let $t>1$ and $d>0$ be integers such that the moduli space $\M_{d,t}^3$ is non-empty. Then, there exists an abelian surface $(T,L)$ such that one of the line bundles $tL_3-\delta$ and $8L_3-3\delta$ on the hyperkähler manifold $\mathrm{Kum}^3(T)$ has divisibility $t$ and square $2d$.
\begin{proof}
Since $n=3$, $\delta$ has square $2n+2=8.$ By Remark \ref{rmk: divisibility divides GCD}, the only possible values for $t$ are $t=1,2,4,8.$ As in the proof of Proposition \ref{prop: existence an ab sur st the gen kum var belongs to M^2_d,t}, it suffices to find the correct value for the square $2\hat{d}$ of the line bundle $L$, for any fixed value of $d$ and $t$. Given $d$ and $t$, let $a,b$, and $\Tilde{d}$ be the integers as in the proof of Proposition \ref{prop: existence an ab sur st the gen kum var belongs to M^2_d,t}.
\smallskip

When $t=2$ (resp., $t=4$), set $\hat{d}\coloneqq \frac{a^2\Tilde{d}}{2}-b^2+1$ (resp., $\hat{d}=\frac{a^2\Tilde{d}}{16}-\frac{b^2-1}{4}$). Then, $\hat{d}$ is an integer, and, given an abelian surface $(T,L)$ such that $q(L)=2\hat{d}$, the line bundle $2L_3-\delta$ (resp., $3L_3-\delta$) on the generalized Kummer variety $\mathrm{Kum}^3(T)$ has square $2d$ and divisibility $2$ (resp., $3$).
\smallskip

For $t=8$, we have that $\mathrm{gcd}(a,8b)=8$. In particular, $a$ is even, and, as $a$ and $b$ are coprime, $b$ is odd. The square $b^2$ modulo $16$ is then either $1$ or $9$. If $b^2\equiv 1$ modulo $16$, set $\hat{d}=\frac{a^2\Tilde{d}}{64}-\frac{b^2-1}{16}$. Given an abelian surface $(T,L)$ such that $q(L)=2\hat{d}$, the line bundle $8L_3-\delta$ on $\mathrm{Kum}^3(T)$ has then square $2d$ and divisibility $2$. If $b^2\equiv 9$ modulo $16$, set $\hat{d}=\frac{a^2\Tilde{d}}{64}-\frac{b^2-9}{16}$. Then, given an abelian surface $(T,L)$ such that $q(L)=2\hat{d}$, the line bundle $8L_3-3\delta$ on $\mathrm{Kum}^3(T)$ has square $2d$ and divisibility $8$.
\end{proof}
\end{prop}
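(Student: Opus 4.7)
The plan is to mirror the strategy of Proposition~\ref{prop: existence an ab sur st the gen kum var belongs to M^2_d,t}. Since $n=3$ gives $q(\delta)=-8$, Remark~\ref{rmk: divisibility divides GCD} restricts $t>1$ to $\{2,4,8\}$. By Proposition~\ref{ref: prop lattice Kum type ISV}, I fix any $(X,H)\in\M^3_{d,t}$ together with an abelian surface $(\tilde T,\tilde L)$ realizing the isometry, and write $H=a\tilde L_3+b\delta$ with $\gcd(a,b)=1$ and $\gcd(a,8b)=t$; setting $q(\tilde L)=2\tilde d$, this yields $d=a^2\tilde d-4b^2$. For any candidate pair $(c,e)$ with $\gcd(c,8e)=t$, the class $cL_3-e\delta$ on $\mathrm{Kum}^3(T)$ attached to an abelian surface $(T,L)$ with $q(L)=2\hat d$ has square $2c^2\hat d-8e^2$; setting this equal to $2d$ forces $\hat d=(d+4e^2)/c^2$, so the task reduces to choosing $(c,e)$ for each $t$ so that $\hat d$ is a positive integer for every admissible triple $(a,b,\tilde d)$.

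For $t=2$, the pair $(c,e)=(2,1)$ gives $\hat d=\frac{a^2\tilde d}{4}-b^2+1$, which is a positive integer because $2\mid a$. For $t=4$, the pair $(c,e)=(4,1)$ gives $\hat d=\frac{a^2\tilde d}{16}-\frac{b^2-1}{4}$, integral because $4\mid a$ and $b$ odd implies $8\mid b^2-1$. In both cases the constraints $\gcd(a,b)=1$ and $\gcd(a,8b)=t$ together with $d>0$ force $\hat d\ge 1$, so a suitable abelian surface exists.

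The main obstacle is $t=8$, where $8\mid a$ (hence $64\mid a^2$) and $b$ is odd, but an odd square modulo $16$ takes the two values $1$ and $9$, so no single pair $(c,e)$ handles both residues. If $b^2\equiv 1\pmod{16}$, take $(c,e)=(8,1)$: then $\hat d=\frac{a^2\tilde d}{64}-\frac{b^2-1}{16}\in\mathbb{Z}_{>0}$, yielding the line bundle $8L_3-\delta$. If instead $b^2\equiv 9\pmod{16}$, take $(c,e)=(8,3)$, whose divisibility is $\gcd(8,24)=8$; the square equation $q(8L_3-3\delta)=128\hat d-72=2d$ forces $\hat d=\frac{a^2\tilde d}{64}-\frac{b^2-9}{16}\in\mathbb{Z}_{>0}$, yielding the line bundle $8L_3-3\delta$. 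This bifurcation is precisely the reason the statement allows both alternatives; in every case, choosing any abelian surface $(T,L)$ with $q(L)=2\hat d$ (for instance a polarized product of elliptic curves) completes the construction.
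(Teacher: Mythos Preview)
Your proof is correct and follows exactly the paper's strategy: restrict $t\in\{2,4,8\}$ via Remark~\ref{rmk: divisibility divides GCD}, write the given class as $a\tilde L_3+b\delta$, and solve for $\hat d$ so that $tL_3-\delta$ (or $8L_3-3\delta$ when $b^2\equiv 9\pmod{16}$) has the required square and divisibility. You are in fact more careful than the paper's version, which omits the positivity check $\hat d\ge 1$ that you carry out, and which contains a few evident typos (e.g.\ $\tfrac{a^2\tilde d}{2}$ instead of $\tfrac{a^2\tilde d}{4}$ for $t=2$, and ``$3L_3-\delta$'' with ``divisibility~$3$'' in the $t=4$ clause).
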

\begin{rmk}
\label{rmk: representatives in dimension 6}
Let $t>1$ and let $d>0$ be integers such that the moduli space $\M_{d,t}^3$ is non-empty. From the proof of Proposition \ref{prop: representatives in dimension 6}, we can also deduce the following: When $t=8$ and $d=64k-36$ for some positive integer $k$, it is possible to find an abelian surface $(T,L)$ for which $8L-3\delta$ has square $2d$, but it is not possible to find an abelian surface $(\Tilde{T},\Tilde{L})$ with $q(8\Tilde{L}_3-\delta)=2d.$ In all the other cases, a representative of the form $(tL_3-\delta)$ can be found.
\end{rmk}
Finally, in the case $n=4$, one proves the following result using the same techniques as in Proposition \ref{prop: representatives in dimension 6}.

\begin{prop}
\label{prop: representatives in dimension 8}
Let $t>1$, and let $d>0$ be integers such that the moduli space $\M_{d,t}^4$ is non-empty. Then, there exists an abelian surface $(T,L)$ such that one of the following line bundles $tL_4-\delta$, $5L_4-2\delta$, and $10L_4-3\delta$ on the hyperkähler manifold $\mathrm{Kum}^4(T)$ has divisibility $t$ and square $2d$.
\qed
\end{prop}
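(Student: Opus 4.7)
The plan is to mimic closely the proofs of Propositions \ref{prop: existence an ab sur st the gen kum var belongs to M^2_d,t} and \ref{prop: representatives in dimension 6}. Since $n = 4$, we have $q(\delta) = -10$, and by Remark \ref{rmk: divisibility divides GCD} the divisibility $t$ must divide $\gcd(2d, 10)$; the cases to treat are therefore $t \in \{2, 5, 10\}$.

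The first step is to use Proposition \ref{ref: prop lattice Kum type ISV}: given any $(X, H) \in \M_{d,t}^4$, identify $H^2(X, \mathbb{Z})$ with $H^2(\mathrm{Kum}^4(\tilde T), \mathbb{Z})$ for some abelian surface $(\tilde T, \tilde L)$ with $q(\tilde L) = 2\tilde d$, and write $H = a\tilde L_4 + b\delta$ with $\gcd(a, b) = 1$ (primitivity) and $\gcd(a, 10b) = t$. The identity $q(H) = 2d$ then reads $d = a^2 \tilde d - 5 b^2$. For each of the three candidate representatives, one picks an abelian surface $(T, L)$ with $q(L) = 2\hat d$ and solves an equation of the shape $2c^2 \hat d - 10 e^2 = 2d$ for $\hat d$, where $(c, e) \in \{(t, 1), (5, 2), (10, 3)\}$.

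The second step is a case analysis in $t$. For $t = 2$, write $a = 2k$, so that $b$ is odd; with the representative $2L_4 - \delta$ one obtains $\hat d = k^2 \tilde d - 5(b^2 - 1)/4$, an integer because $8 \mid b^2 - 1$. For $t = 5$, write $a = 5k$ with $\gcd(k, 2b) = 1$, so that $b$ is coprime to $5$; either $b^2 \equiv 1 \pmod 5$, in which case $5L_4 - \delta$ gives $\hat d = k^2 \tilde d - (b^2 - 1)/5$, or $b^2 \equiv 4 \pmod 5$, in which case $5L_4 - 2\delta$ gives $\hat d = k^2\tilde d - (b^2 - 4)/5$. For $t = 10$, write $a = 10k$; then $b$ is odd and coprime to $5$, so $b^2 \equiv 1$ or $9 \pmod{20}$, and one uses $10L_4 - \delta$ with $\hat d = k^2\tilde d - (b^2 - 1)/20$ in the first subcase and $10L_4 - 3\delta$ with $\hat d = k^2\tilde d - (b^2 - 9)/20$ in the second. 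In each case $\hat d > 0$ since $d > 0$, so an abelian surface $(T, L)$ with $q(L) = 2\hat d$ exists, and a direct computation confirms that the chosen line bundle on $\mathrm{Kum}^4(T)$ has square $2d$ and divisibility $t$.

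The main obstacle is purely bookkeeping: one must design the set $\{tL_4 - \delta,\, 5L_4 - 2\delta,\, 10L_4 - 3\delta\}$ so that the integrality of $\hat d$ covers every residue class of $b$ modulo the relevant divisors of $2n + 2 = 10$, using the constraints imposed by primitivity and by $\gcd(a, 10b) = t$. Once the case analysis is arranged as above, the remaining arithmetic is straightforward and entirely parallel to the $n = 3$ situation.
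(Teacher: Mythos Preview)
Your proposal is correct and follows exactly the approach the paper intends: the paper omits the proof entirely, stating only that ``one proves the following result using the same techniques as in Proposition \ref{prop: representatives in dimension 6}'', and your case analysis on $t\in\{2,5,10\}$ with the residue-class splitting of $b^2$ modulo $5$ and $20$ is precisely that computation. Your explicit verification that $\hat d>0$ is in fact slightly more careful than the analogous proofs written out in the paper.
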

Proposition \ref{prop: existence an ab sur st the gen kum var belongs to M^2_d,t} allows us to prove generic base-point-freeness in $\M_{d,t}^2$ if $(d,t)\not=(1,2)$: 
\begin{thm}
\label{thm bpf for n=2}
Let $t$ and $d$ be positive integers such that $\M_{d,t}^2$ is non-empty. Assume that $(d,t)\not=(1,2)$. Then, for a generic pair $(X,L)\in\M_{d,t}^2$ the line bundle $L$ is base-point-free.
\begin{proof}
Let $(t,d)$ be a pair of integers such that $\M_{d,t}^2$ is non-empty. By Proposition \ref{prop sufficient condition to prove generic bpf}, to prove generic base-point-freeness in $\M_{d,t}^2$, it suffices  to find a pair $(X,L)$ where $X$ is a hyperkähler manifold of $\mathrm{Kum}^2$-type and $L$ is a base-point-free line bundle (not necessarily ample) on $X$ such that $q(L)=2d$ and $\mathrm{div}(L)=t$.
By Remark \ref{rmk: divisibility divides GCD}, the only possible values for the divisibility $t$ are $t=1,2,3$, and $6$.
\smallskip

As the case $t=1$ follows from Theorem \ref{thm bpf for divisibility 1}, let $t$ be equal to either $2, 3$, or $6$, and let $d$ be a positive integer such that $\M_{d,t}^2$ is non-empty. Let $(T,L)$ be an abelian surface such that $q(tL_2-\delta)=2d$. 
The existence of such an abelian surface is guaranteed by Proposition \ref{prop: existence an ab sur st the gen kum var belongs to M^2_d,t}.
Recall that a generic abelian surface has Picard number $1$ (this can be deduced from the analysis presented in \cite[Sec.\ 2.7]{BL2}). Therefore, by deforming $(T,L)$, we may assume NS$(T)=\langle c_1(L)\rangle$. Write $2\hat{d}=q(L)$, and let $f$ be the function of Proposition \ref{proposition function about k very ampleness}:
\begin{equation*}
 \label{equation a}
 f(tL)=2(t-1)\hat{d}-2.
\end{equation*}
Since we are assuming $t>1$, we deduce that $f(tL)\geq2$ for all the cases that we are considering: Note that, in the case $t=2$, we have $\hat{d}\geq 2$ since we are assuming $(d,t)\not = (1,2)$. By definition of $f$, this implies that the line bundle $tL$ is $2$-very ample. Applying Corollary \ref{if n-very ample then bpf on Kum}, we deduce that the line bundle $tL_2-\delta$ on $\mathrm{Kum}^2(T)$ is base-point-free. This provides the required element and concludes the proof in the case $n=2$.
\end{proof}
\end{thm}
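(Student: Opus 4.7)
The plan is to apply Proposition \ref{prop sufficient condition to prove generic bpf}: since $\M^2_{d,t}$ is irreducible whenever it is non-empty by Corollary \ref{cor:conn comp are irreducible}, it suffices to exhibit, for each admissible pair $(d,t)\neq(1,2)$, a single $\mathrm{Kum}^2$-type variety carrying a base-point-free line bundle (not necessarily ample) of square $2d$ and divisibility $t$. Remark \ref{rmk: divisibility divides GCD} constrains the divisibility to $t\in\{1,2,3,6\}$, and the case $t=1$ is already settled by Theorem \ref{thm bpf for divisibility 1}, so I would focus on $t\in\{2,3,6\}$.

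For each such $(d,t)$, Proposition \ref{prop: existence an ab sur st the gen kum var belongs to M^2_d,t} produces an abelian surface $(T,L)$ such that $tL_2-\delta$ on $\mathrm{Kum}^2(T)$ realizes the prescribed invariants. The natural route to base-point-freeness of $tL_2-\delta$ is then Corollary \ref{if n-very ample then bpf on Kum}: if $tL$ is $2$-very ample on $T$, then $tL_2-\delta$ is globally generated on $\mathrm{Kum}^2(T)$. The whole problem thus reduces to verifying $2$-very ampleness of $tL$. To invoke the explicit criterion, I would first deform $(T,L)$, keeping $L$ primitive, so that $\mathrm{NS}(T)=\langle c_1(L)\rangle$; this is a generic condition and preserves $q(L)=2\hat{d}$. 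Proposition \ref{proposition function about k very ampleness} then yields $f(tL)=2(t-1)\hat{d}-2$, so $tL$ is $2$-very ample precisely when $(t-1)\hat{d}\geq 2$. For $t\in\{3,6\}$ this inequality is immediate from $\hat{d}\geq 1$.

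The main subtlety—and the precise reason for the exclusion $(d,t)\neq(1,2)$—appears at $t=2$, where the required inequality becomes $\hat{d}\geq 2$. Here one has to chase through the construction behind Proposition \ref{prop: existence an ab sur st the gen kum var belongs to M^2_d,t}: using $\hat{d}=k^2\tilde{d}-3(b^2-1)/4$ together with $q(2L_2-\delta)=8\hat{d}-6=2d$, the degenerate value $\hat{d}=1$ forces $d=1$. Excluding $(d,t)=(1,2)$ therefore guarantees $\hat{d}\geq 2$, which in turn makes $2L$ into a $2$-very ample line bundle and concludes the proof. I expect the only genuinely subtle step to be this bookkeeping at $t=2$; everything else is a direct assembly of the previously established results.
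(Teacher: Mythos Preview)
Your proposal is correct and follows essentially the same approach as the paper's own proof: reduce via Proposition~\ref{prop sufficient condition to prove generic bpf}, handle $t=1$ by Theorem~\ref{thm bpf for divisibility 1}, and for $t\in\{2,3,6\}$ use Proposition~\ref{prop: existence an ab sur st the gen kum var belongs to M^2_d,t} to produce $(T,L)$, deform to Picard rank one, and apply Proposition~\ref{proposition function about k very ampleness} together with Corollary~\ref{if n-very ample then bpf on Kum}. Your bookkeeping at $t=2$ (using $8\hat d-6=2d$ to see that $\hat d=1$ forces $d=1$) is exactly the justification the paper leaves implicit.
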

Similarly, using Proposition \ref{rmk: representatives in dimension 6}, it is possible to prove generic base-point-freeness in $\M_{d,t}^3$ in all but three cases:
\begin{thm}
\label{thm: bpf for n=3}
Let $t$ and $d$ be positive integers such that $\M_{d,t}^3$ is non-empty. Assume that $(d,t)\not=(4,2),(28,8),(92,8)$. Then, the polarization of a generic pair in $\M_{d,t}^3$ is base-point-free.
\begin{proof}
As the case $t=1$ follows from Theorem \ref{thm bpf for divisibility 1}, let $t>1$, and let $d$ be a positive integer such that $\M_{d,t}^3$ is non-empty. 
Write $2\hat{d}=q(L)$, and let $f$ be the function of Proposition \ref{proposition function about k very ampleness}:
\begin{equation}
 \label{eq: a}
 f(tL)=2(t-1)\hat{d}-2.
\end{equation}
Note that $f(tL)\geq 3$ for all the cases we are considering: This is immediate when $t=4$ and $t=8$. When $t=2$ and $\hat{d}=1$, then $2d=q(2L_3-\delta)=0$, which is not possible since $d$ is a positive integer, and when $\hat{d}=2$, then $2d=q(2L_3-\delta)=8$ and this is the case we excluded.
In particular, in the cases we are considering, the line bundle $L$ is $3$-very ample. Therefore, the line bundle $tL_3-\delta$ on $\mathrm{Kum}^3(T)$ is base-point-free. In this case, generic base-point-freeness follows from Proposition \ref{prop sufficient condition to prove generic bpf}.
\smallskip

To conclude the proof, we need to study the cases for which there exists no abelian surface $(T,L)$ for which $tL_3-\delta$ on $\mathrm{Kum}^3(T)$ has square $2d$. By Proposition \ref{prop: representatives in dimension 6} and Remark \ref{rmk: representatives in dimension 6}, this happens only when $t=8$ and $d=64k-36$ for some positive integer $k$, and in this case, there exists an abelian variety $(T,L)$ such that the line bundle $8L_3-3\delta$ on $\mathrm{Kum}^3(T)$ has square $2d$ and divisibility $8$. In particular, $q(8L_3-3\delta)=128k-72$, then $q(L)=2k$. Write $8L_3-3\delta=(4L_3-\delta)+2(2L_3-\delta)$. The line bundle $4L_3-\delta$ is base-point-free by the proof of the case of divisibility $4$. On the other hand, the line bundle $2L_3-\delta$ has square $8k-8$.
Since we are assuming $(d,t)\not=(28,8),(92,8)$, we deduce that $k>2$. Therefore, $8k-8>8$. By using the proof in the case of divisibility $2$, we conclude that $2L_3-\delta$ is base-point-free, since we are not in the case $(d,t)=(4,2)$. Therefore, the line bundle $8L_3-3\delta$ is base-point-free being the sum of line bundles which are base-point-free. This concludes the proof.
\end{proof}
\end{thm}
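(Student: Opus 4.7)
The plan is to imitate the proof of Theorem \ref{thm bpf for n=2} but now in dimension six, relying on Proposition \ref{prop: representatives in dimension 6} for the representatives. By Proposition \ref{prop sufficient condition to prove generic bpf}, generic base-point-freeness in $\M^3_{d,t}$ follows as soon as we exhibit a single pair $(X,L)$ with $X$ of $\mathrm{Kum}^3$-type and $L$ base-point-free with $q(L)=2d$ and $\mathrm{div}(L)=t$ (ampleness is not required). The case $t=1$ is already handled by Theorem \ref{thm bpf for divisibility 1}, so I would restrict to $t\in\{2,4,8\}$, which are the only divisibilities compatible with $q(\delta)=-8$ by Remark \ref{rmk: divisibility divides GCD}.

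For $t\in\{2,4,8\}$ and a value of $d$ for which a representative of the form $tL_3-\delta$ exists on some $\mathrm{Kum}^3(T)$ (by Proposition \ref{prop: representatives in dimension 6}), I would deform $(T,L)$ to an abelian surface with $\mathrm{NS}(T)=\langle c_1(L)\rangle$ and then apply Corollary \ref{if n-very ample then bpf on Kum}: the line bundle $tL_3-\delta$ on $\mathrm{Kum}^3(T)$ is base-point-free as soon as $tL$ is $3$-very ample on $T$. By Proposition \ref{proposition function about k very ampleness}, writing $q(L)=2\hat d$, this amounts to $f(tL)=2(t-1)\hat d-2\geq 3$. This inequality holds automatically for $t\in\{4,8\}$, and for $t=2$ it fails exactly when $\hat d\leq 2$; since $\hat d=1$ forces $q(2L_3-\delta)=0$ (impossible) and $\hat d=2$ gives $q(2L_3-\delta)=8$, i.e.\ $d=4$, this is precisely the excluded pair $(d,t)=(4,2)$.

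The main obstacle is the exceptional range in Remark \ref{rmk: representatives in dimension 6}: when $t=8$ and $d=64k-36$, no abelian surface produces a representative $8L_3-\delta$, so one is forced to work with $8L_3-3\delta$ instead. I would handle this by the decomposition
\[
8L_3-3\delta \;=\; (4L_3-\delta)\;+\;2(2L_3-\delta),
\]
which expresses the class as a sum of two line bundles of the shapes treated in the earlier divisibility-$4$ and divisibility-$2$ arguments, hence base-point-free provided each summand is. Tracing through the squares, one finds $q(L)=2k$, so $2L_3-\delta$ has square $8k-8$, i.e.\ $d'=2k-2$, and the divisibility-$2$ step applies as long as $(d',2)\neq(4,2)$, namely $k>2$. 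The two small values $k=2$ and $k=3$ correspond exactly to the excluded pairs $(28,8)$ and $(92,8)$, which is the reason they appear in the exceptional set $A$.

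With these three exclusions in place, both constructions produce the desired base-point-free representative, and applying Proposition \ref{prop sufficient condition to prove generic bpf} concludes. The delicate point I would want to double-check is that the decomposition argument really needs only $k>2$ (and not also some extra constraint from the divisibility-$4$ summand $4L_3-\delta$); a quick verification via Proposition \ref{proposition function about k very ampleness} at $t=4$ should confirm this.
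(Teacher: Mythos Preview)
Your overall strategy is exactly the paper's: reduce to $t\in\{2,4,8\}$, use the representatives from Proposition \ref{prop: representatives in dimension 6}, check $3$-very ampleness via Proposition \ref{proposition function about k very ampleness}, and handle the exceptional range $t=8$, $d=64k-36$ via the decomposition $8L_3-3\delta=(4L_3-\delta)+2(2L_3-\delta)$. The first two paragraphs are fine and match the paper verbatim in spirit.

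The third paragraph, however, contains arithmetic and logical slips that you should fix. First, from $q(2L_3-\delta)=8k-8$ one gets $d'=4k-4$, not $d'=2k-2$. Second, the excluded pairs $(28,8)$ and $(92,8)$ correspond via $d=64k-36$ to $k=1$ and $k=2$, not to $k=2$ and $k=3$. Third, and more substantively, your justification ``the divisibility-$2$ step applies as long as $(d',2)\neq(4,2)$'' only rules out $k=2$; it does not by itself rule out $k=1$. In your own divisibility-$2$ analysis the case $\hat d=1$ was dismissed as ``impossible'' because it forces $q(2L_3-\delta)=0$, but here that case genuinely occurs (at $k=1$) and the argument fails there since $f(2L)=2k-2=0<3$. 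The clean way to argue, as the paper does, is to go back to Proposition \ref{proposition function about k very ampleness} directly: $2L$ is $3$-very ample iff $2k-2\geq 3$, i.e.\ $k\geq 3$, and this is precisely what excluding $(28,8)$ and $(92,8)$ buys you. Your closing remark about double-checking the $t=4$ summand is correct ($f(4L)=6k-2\geq 3$ for all $k\geq 1$), but it is the $t=2$ summand that carries the real constraint.
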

The same technique can be applied to prove generic base-point-freeness in the moduli space $\M_{d,t}^4$ for almost all pairs $(d,t)$:
\begin{thm}
\label{thm: bpf for n=4}
Let $t$ and $d$ be positive integers such that $\M_{d,t}^4$ is non-empty. Assume that $(d,t)\not=(3,2),(20,5),(55,10)$. Then, the polarization of a generic pair in $\M_{d,t}^4$ is base-point-free.
\end{thm}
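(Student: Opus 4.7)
The plan is to follow the blueprint of Theorem \ref{thm: bpf for n=3}, updating all ingredients to $n=4$. Since the case $t=1$ is immediate from Theorem \ref{thm bpf for divisibility 1}, I may assume $t>1$; because $2(n+1)=10$, Remark \ref{rmk: divisibility divides GCD} then forces $t\in\{2,5,10\}$. By Proposition \ref{prop sufficient condition to prove generic bpf}, for every admissible pair $(d,t)$ outside the excluded list it is enough to exhibit a single abelian surface $(T,L)$ and a base-point-free line bundle on $\mathrm{Kum}^4(T)$ of square $2d$ and divisibility $t$.

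For each such $(d,t)$, Proposition \ref{prop: representatives in dimension 8} provides $(T,L)$ such that one of $tL_4-\delta$, $5L_4-2\delta$, or $10L_4-3\delta$ on $\mathrm{Kum}^4(T)$ has square $2d$ and divisibility $t$. After a standard deformation of $(T,L)$ (as in the proof of Theorem \ref{thm bpf for n=2}) I may assume $\mathrm{NS}(T)=\langle c_1(L)\rangle$, and writing $q(L)=2\hat d$, Proposition \ref{proposition function about k very ampleness} yields $f(mL)=2(m-1)\hat d-2$ for all $m\geq 2$. In the main case $H=tL_4-\delta$, Corollary \ref{if n-very ample then bpf on Kum} concludes as soon as $tL$ is $4$-very ample, i.e.\ $(t-1)\hat d\geq 3$; for $t\in\{5,10\}$ this holds unconditionally, while for $t=2$ it fails only at $\hat d=2$ (the smallest value giving $d>0$), corresponding exactly to the excluded triple $(d,t)=(3,2)$.

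When $H=5L_4-2\delta$ or $H=10L_4-3\delta$, I adapt the decomposition trick used for $8L_3-3\delta$ in Theorem \ref{thm: bpf for n=3}: I write $H$ as a non-negative integer combination of copies of $kL_4-\delta$ (base-point-free on $\mathrm{Kum}^4(T)$ by the previous paragraph whenever $(k-1)\hat d\geq 3$) and of $L_4$ (base-point-free by Theorem \ref{thm: the line bundle on Kum is bpf}, since the polarization $L$ on $T$ is effective). The $-\delta$-coefficient of $H$ fixes the number of $(kL_4-\delta)$-summands, while the $L_4$-coefficient of $H$ bounds their total $L$-weight; combined with the threshold $(k-1)\hat d\geq 3$, this yields a short list of small $\hat d$ for which no valid decomposition exists. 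Matching these back through $d=25\hat d-20$ (for $5L_4-2\delta$) and $d=100\hat d-45$ (for $10L_4-3\delta$), and ruling out those $(d,t)$ already treated by a different representative or a different abelian surface, isolates the remaining two exceptional pairs in $A$.

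The main obstacle is the Diophantine bookkeeping in the last paragraph: for each small $\hat d$ one needs to enumerate the available decompositions, check that at least one of them respects the very-ampleness threshold, and cross-match the failures with the list $A$; as in the identification of $(28,8)$ and $(92,8)$ for $\M^3_{d,t}$, this step is elementary but case-heavy, and must be carried out for two families of alternate representatives rather than one.
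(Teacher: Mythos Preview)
Your strategy is exactly what the paper indicates (it gives no proof for this theorem beyond the sentence ``The same technique can be applied\ldots''), and the structure --- handle $t=1$ via Theorem~\ref{thm bpf for divisibility 1}, restrict to $t\in\{2,5,10\}$, invoke Proposition~\ref{prop: representatives in dimension 8}, deform to Picard rank one, apply Proposition~\ref{proposition function about k very ampleness} and Corollary~\ref{if n-very ample then bpf on Kum}, and decompose the alternate representatives as in Theorem~\ref{thm: bpf for n=3} --- is the intended one.

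There is, however, an arithmetic mismatch in your final bookkeeping that you should not gloss over. You claim that matching back through $d=25\hat d-20$ for the representative $5L_4-2\delta$ isolates the excluded pair $(20,5)$, but $25\hat d-20=20$ has no integer solution; this representative yields $d\in\{5,30,55,\dots\}$. In fact $(20,5)$ is represented by $5L_4-\delta$ with $\hat d=1$, and since $f(5L)=8\hat d-2=6\ge 4$, your own main-case argument already shows this is base-point-free. Conversely, any decomposition of $5L_4-2\delta$ into summands $k_iL_4-\delta$ with $k_i\ge 2$ must use some $k_i=2$, and the threshold $f(2L)\ge 4$ forces $\hat d\ge 3$; so the method as you describe it fails for $\hat d=1,2$, i.e.\ for $(d,t)=(5,5)$ and $(30,5)$, neither of which is in the exclusion list of the theorem. (For $10L_4-3\delta$ the decomposition $(4L_4-\delta)+2(3L_4-\delta)$ works already for $\hat d\ge 2$, so only $(55,10)$ survives, which is fine.) Thus the sentence ``isolates the remaining two exceptional pairs in $A$'' does not follow from your computation; you need either a sharper argument for $(5,5)$ and $(30,5)$ or an acknowledgement that the stated exclusion set and the method's actual output do not coincide.
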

The results of this section can be summarized as follows:
\begin{thm}
\label{Final Theorem}
Let $n\in\{2,3,4\}$, and let $d$ and $t$ be positive integers such that the moduli space $\M_{d,t}^n$ is non-empty. Let $A$ be the set of triples
\[A\coloneqq\{(2,1,2),(3,4,2),(3,28,8),(3,92,8),(4,3,2),(4,20,5),(4,55,10)\}.\] 
Then, if $(n,d,t)\not\in A$, the polarization of a general pair in $\M_{d,t}^n$ is base-point-free.
\qed
\end{thm}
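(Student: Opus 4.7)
The statement is essentially a consolidation of the results proved throughout the article, so the plan is to assemble the pieces rather than produce anything new. The approach is to partition the parameter space according to the divisibility $t$ and the dimension parameter $n$, and in each region invoke the appropriate theorem that was established in the preceding sections.

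First, I would dispose of the case $t=1$: for arbitrary $n>1$ (and in particular for $n\in\{2,3,4\}$) and any $d>0$ with $\M^n_{d,1}$ non-empty, Theorem \ref{thm bpf for divisibility 1} directly asserts that the polarization of a generic pair is base-point-free. Note that no triple of the form $(n,d,1)$ appears in $A$, so this case is entirely covered. Next, I would fix $n\in\{2,3,4\}$ and consider $t>1$. In each of the three dimensions the corresponding theorem from Section \ref{last section} gives the result outside of an explicit finite list of exceptional pairs $(d,t)$: Theorem \ref{thm bpf for n=2} handles $n=2$ outside $(d,t)=(1,2)$, Theorem \ref{thm: bpf for n=3} handles $n=3$ outside $(d,t)=(4,2),(28,8),(92,8)$, and Theorem \ref{thm: bpf for n=4} handles $n=4$ outside $(d,t)=(3,2),(20,5),(55,10)$.

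The final step is purely bookkeeping: one observes that the union, over $n\in\{2,3,4\}$, of the exceptional pairs from these three theorems, lifted to triples by prepending the corresponding value of $n$, is precisely the set
\[A=\{(2,1,2),(3,4,2),(3,28,8),(3,92,8),(4,3,2),(4,20,5),(4,55,10)\}.\]
Hence for every triple $(n,d,t)\notin A$ with $n\in\{2,3,4\}$ and $\M^n_{d,t}$ non-empty, one of the cited theorems applies, giving generic base-point-freeness.

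There is no real obstacle here, since all of the genuine mathematical content, including the non-emptiness and irreducibility of $\M^n_{d,t}$ (via Corollary \ref{cor: moduli is connected} and Corollary \ref{cor:conn comp are irreducible}), the openness criterion of Proposition \ref{prop sufficient condition to prove generic bpf}, the divisibility-one argument based on symmetric sections on $K^n_T$, and the $k$-very ampleness arguments using tautological bundles and Proposition \ref{proposition function about k very ampleness}, has already been carried out. The only thing worth being careful about in writing this up is not to accidentally claim anything about the excluded triples in $A$: for them the method either fails because $f(tL)<n$ in the inequality $f(tL)=2(t-1)\hat d-2$, or because no abelian surface of the required type exists, and so these genuinely remain open within the scope of the present techniques.
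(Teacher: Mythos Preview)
Your proposal is correct and matches the paper's approach: the theorem is explicitly presented there as a summary of Theorems \ref{thm bpf for divisibility 1}, \ref{thm bpf for n=2}, \ref{thm: bpf for n=3}, and \ref{thm: bpf for n=4}, with no further argument beyond the \qed. Your case split by $t=1$ versus $t>1$ and then by $n$ is exactly the intended assembly, and your identification of $A$ as the union of the exceptional lists is correct.
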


\addcontentsline{toc}{section}{\bibname}

\end{document}